\title[On long-time behavior of solutions of the ZR/BR equation]
{On long-time behavior of solutions of the 
Zakharov-Rubenchik/Benney-Roskes system}
\author[M. E. Mart\'inez]{Mar\'ia E. Mart\'inez}
\address{Departamento de Ingenier\'ia Matem\'atica, 
Facultad de Ciencias F\'isicas y Matem\'aticas, Universidad de Chile
Beauchef 851, Santiago, Chile}
\email{maria.martinez.m@uchile.cl}
\author[J.M. Palacios]{Jos\'e M. Palacios}
\address{Institut Denis Poisson, Universit\'e de Tours, 
Universit\'e d'Orleans, CNRS, Parc Grandmont 37200, Tours, France}
\email{jose.palacios@lmpt.univ-tours.fr}
\newcommand{\be}{\begin{equation}}
\newcommand{\ee}{\end{equation}}
\newcommand{\bp}{\begin{proof}}
\newcommand{\ep}{\end{proof}}
\newcommand{\bel}{\begin{equation}\label}
\newcommand{\eeq}{\end{equation}}
\newcommand{\bea}{\begin{eqnarray}}
\newcommand{\eea}{\end{eqnarray}}
\newcommand{\bee}{\begin{eqnarray*}}
\newcommand{\eee}{\end{eqnarray*}}
\newcommand{\ben}{\begin{enumerate}}
\newcommand{\een}{\end{enumerate}}
\newcommand{\R}{\mathbb{R}}
\newcommand{\N}{\mathbb{N}}
\newcommand{\sech}{\operatorname{sech}}
\newcommand{\supp}{\operatorname{supp}}
\newtheorem{thm}{Theorem}[section]
\newtheorem{cor}[thm]{Corollary}
\newtheorem{lem}[thm]{Lemma}
\newtheorem{prop}[thm]{Proposition}
\theoremstyle{remark}
\newtheorem{rem}{Remark}[section]
\definecolor{codegreen}{rgb}{0,0.6,0}
\definecolor{codegray}{rgb}{0.5,0.5,0.5}
\definecolor{codepurple}{rgb}{0.58,0,0.82}
\definecolor{backcolour}{rgb}{0.95,0.95,0.92}
\lstdefinestyle{mystyle}{
	backgroundcolor=\color{backcolour},
	commentstyle=\color{codegreen},
	keywordstyle=\color{magenta},
	numberstyle=\tiny\color{codegray},
	stringstyle=\color{codepurple},
	basicstyle=\footnotesize,
	breakatwhitespace=false,
	breaklines=true,
	captionpos=b,
	keepspaces=true,
	numbers=left,
	numbersep=5pt,
	showspaces=false,
	showstringspaces=false,
	showtabs=false,
	tabsize=2
}
\numberwithin{equation}{section}
\pgfplotsset{compat=newest}
\theoremstyle{definition}
\numberwithin{ej}{section}
\begin{document}

%===========================================

\begin{abstract}
We study decay properties for  solutions to the initial value problem associated with the one-dimensional Zakharov-Rubenchik/Benney-Roskes system. We prove time-integrability in growing compact intervals of size $t^{r}$, $r<2/3$, centered on some characteristic curves coming from the underlying transport equations associated with the ZR/BR system. Additionally, we prove decay to zero of the local energy-norm in so-called far-field regions. Our results are independent of the size of the initial data and do not require any parity condition. 
\end{abstract}

\maketitle

\medskip
\section{Introduction and main results}

\subsection{The model}
In this work we seek to show decay properties for solutions of the 
initial value problem (IVP) associated with the 
Zakharov-Rubenchik/Benney-Roskes (ZR/BR) system in one space dimension
\begin{align}
\label{zr_br_eq}
\begin{cases}
i\partial_t\psi+\omega \partial_x^2\psi=
\gamma \big(\eta -\tfrac{1}{2}\alpha \rho+q\vert\psi\vert^2\big)\psi,
\\ \theta\partial_t\rho+\partial_x\big(\eta-\alpha\rho\big)
=-\gamma \partial_x(\vert \psi\vert^2),
\\ \theta\partial_t \eta+\partial_x\big(\beta\rho-\alpha\eta\big)
=\tfrac{1}{2}\alpha\gamma\partial_x(\vert\psi\vert^2),
\\ \psi(0,x)=\psi_0(x),\ \rho(0,x)=\rho_0(x), \ \eta(0,x)=\eta_0(x).
\end{cases}
\end{align}
Here $\psi(t,x)$ denotes a complex-valued function, while $\rho(t,x)$ 
and $\eta(t,x)$ are both real-valued functions, and $t,x\in\R$. 
All Greek letters $(\omega,\alpha,\beta,\gamma,\theta)$ denote real 
parameters, and in the sequel we shall always assume that 
\[
\omega>0, \quad \beta>0, \quad \gamma>0, \quad \beta-\alpha^2>0,
\quad 0<\theta<1, \quad \hbox{and}\quad 
q:=\gamma+\dfrac{\alpha(\alpha\gamma-1)}{2(\beta-\alpha^2)}.
\]
Model \eqref{zr_br_eq} corresponds to the one-dimensional case of the most general system derived by Zakharov and Rubenchik \cite{ZaRu} to describe the interaction of spectrally
narrow high-frequency wave packets of small amplitude with low-frequency acoustic type
oscillations. This system was also independently found by Benney and Roskes \cite{BeRo} in the context of gravity waves, and in the $3$-dimensional case has the following form \begin{align}
\label{zr_br_eq_2}
\begin{cases}	
i \partial_t \psi +i v_g \partial_z \psi = 
-\frac{\omega''}{2}\partial_z^2 \psi - \frac{v_g}{2k}
\Delta_{\perp} \psi 
+ (q|\psi|^2+\beta\rho +\alpha \partial_z \eta)\psi, 
\\
\partial_t \rho +\rho_0 \Delta\eta 
+\alpha \partial_z |\psi|^2=0, 
\\
\partial_t \eta +\frac{c^2}{\rho_0}\rho+\beta |\psi|^2=0,
\end{cases}
\end{align}
where $\Delta_\perp=\partial_x^2 +\partial_y^2$. In this context, $\psi(t,x)$ stands for the amplitude of the carrying (high frequency) waves with wave number $k$, frequency $\omega=\omega(k)$ and $v_g =\omega'(k)$ stands for its group velocity. On the other hand, $\rho(t,x)$ and $\eta(t,x)$ correspond to the density fluctuation and the hydrodynamic potential respectively. 

\medskip

System \eqref{zr_br_eq_2} has also been derived in several other physical situations, such as for example, in the study of Alfv\'en waves (transverse oscillations of the magnetic fields) in the Magneto-Hydrodynamics equations (see for instance \cite{ChLaPaSu,PaSuSu}). Moreover, system \eqref{zr_br_eq_2} contains various important models as limiting cases, such as the
classical (scalar) Zakharov system and the Davey-Stewartson systems. We refer to \cite{Co} for a rigorous justification of the Zakharov limit (supersonic limit) of the ZR/BR system. However, the rigorous proof of the Davey-Stewartson limit from system \eqref{zr_br_eq_2} remains still open. 

\medskip

In the one dimensional case the situation is a little better understood. In fact, in this case we can also consider the adiabiatic limit, that is, to take $\theta\to 0$ in \eqref{zr_br_eq}, from where we can formally see  that $\rho(t,x)$ and $\eta(t,x)$ satisfy now the following relations
\[
\rho=-\frac{\gamma \alpha}{2(\beta -\alpha^2)}|\psi|^2, 
\quad 
\eta= - \gamma \frac{\beta-\alpha^2/2}{\beta-\alpha^2}
|\psi|^2.
\]
Then, we infer that 
the complex amplitude $\psi$ solves 
the cubic nonlinear Schr\"odinger equation
\[
i \partial_t \psi + \omega \partial_x^2 \psi = 
- \frac{\gamma \alpha}{3 (\beta- \alpha^2)}|\psi|^2 \psi.
\]
A rigorous justification of such limit (for well-prepared initial data) was proved by 
Oliveira in \cite{Ol2}.  Therefore, we can certainly see that the ZR/BR system is thus richer than those models.

\medskip

On the other hand, the ZR/BR system \eqref{zr_br_eq} and \eqref{zr_br_eq_2} posses a Hamiltonian 
structure \cite{ZaRu}, and hence, it follows (at least formally) 
that the energy of 
system \eqref{zr_br_eq} is conserved along the trajectory, which in the one-dimensional case can be written as
\begin{align*}
E\big(\psi(t),\rho(t),\eta(t)\big)&
:=\int_\R\Big(\omega\vert\psi_x\vert^2%
+\tfrac{\gamma q}{2}\vert\psi\vert^4+\tfrac{\beta}{2}\rho^2
+\tfrac{1}{2}\eta^2+\tfrac{\gamma}{2}(2\eta-\alpha\rho) \vert\psi\vert^2
-\alpha\rho\eta\Big)dx
\\ & =E(\psi_0,\rho_0,\eta_0).
\end{align*}
Moreover, the ZR/BR system \eqref{zr_br_eq} also conserves (formally) the mass and 
the momentum of the solution, which are given by the following relations (respectively)
\begin{align*}
M\big(\psi(t),\rho(t),\eta(t)\big)
&:=\int \vert \psi(t,x)\vert^2dx
=M(\psi_0,\rho_0,\eta_0), \quad \hbox{ and},
\\ P\big(\psi(t),\rho(t),\eta(t)\big)
&:=\mathrm{Im}\int_\R\psi\overline{\psi}_x
-\theta\int_\R \rho(t,x)\eta(t,x)dx=P(\psi_0,\rho_0,\eta_0).
\end{align*}
Additionally, related to these conservation laws, the ZR/BR 
system \eqref{zr_br_eq} is invariant under space-time translations, as well as invariant under phase rotations.

\medskip

Regarding the existence of solitary waves, in the case 
$\beta-\alpha^2>0$, 
$\gamma>0$ and $\theta< 1$, Oliveiro has proved in \cite{Ol} the existence and the orbital stability of solitary waves of the form
\begin{align}\label{soliton}
\big(\psi,\rho,\eta)(t,x):=\big(e^{i\lambda t}e^{icx/2\omega }
R(x-ct),a(c)\vert R(x-ct)\vert^2,b(c)\vert R(x-ct)\vert^2\big),
\end{align}
where $\lambda\in\R$, $c\geq 0$ and $R(\cdot)$ is an positive, even and exponentially decaying 
complex-valued function, while $a(c)$ and $b(c)$ are given by the following formulas 
\[
a(c):=-\frac{\gamma(\beta -\tfrac{\alpha}{2}(c\theta+\alpha))}
{\beta-(c\theta +\alpha)^2}, \quad b(c):=-\frac{\gamma(c\theta +\tfrac{1}{2}\alpha)}
{\beta-(c\theta+\alpha)^2}.
\]
In particular, the analysis carried out by Oliveiro shows that a necessary condition for these solitary waves to exists is that the following two inequalities must be satisfied
\begin{align}\label{condition_soliton_existence}
a(c)-\dfrac{\alpha}{2} b(c)+q<0 \quad \hbox{and} \quad \dfrac{c^2}{4\omega}-\lambda<0.
\end{align}
On the other hand, recently in \cite{LuMaSa} Luong \emph{et al.} studied the existence of the so-called bright and dark solitons for system \eqref{zr_br_eq}. They proved their existence under some conditions on the coefficients of the equations (similar to the one in \eqref{condition_soliton_existence}). Then, they used these solitons to construct line-solitons for the higher dimensional case. However, none of these solitons belongs to the energy space since they do not decay at $\pm \infty$ (see \cite{LuMaSa} for further details).

\medskip

Finally, concerning  the well-posedness for system 
\eqref{zr_br_eq},
Oliveiro \cite{Ol} proved local and global well-posedness 
for the one-dimensional case in 
$ H^2(\R)\times H^1(\R)\times H^1(\R)$. Later, Linares and 
Matheus \cite{LiMa} 
extended the result given by Oliveira showing local (and then 
global) well-posedness for inital data in the energy space 
$H^1(\R)\times L^2(\R)\times L^2(\R)$. Additionally, 
a polinomial bound for 
the growth of the $H^s$-norm of $\psi$ was stated in 
\cite{LiMa}. 
More specifically, they proved that,  for smooth initial data, solutions to system \eqref{zr_br_eq} satisfies the following property: 
 \[
\Vert \psi\Vert_{H^s(\R)}\lesssim 1+\vert t\vert^{(s-1)^+}.
\]
In fact, Linares and Matheus used this property to show that system \eqref{zr_br_eq} is globally well-posed in $H^k(\R)\times H^{k-\frac{1}{2}}(\R)\times H^{k-\frac{1}{2}}(\R)$ for all $k\geq0$. Moreover, regarding the higher dimensional cases, Ponce and Saut  \cite{PoSa} have proved 
that \eqref{zr_br_eq_2} is locally 
well posed in 
$H^s(\R^d)\times H^{s-\frac{1}{2}}(\R^d)\times H^{s+\frac{1}{2}}(\R^d)$, for 
$s>d/2$, where the space-dimension $d=2,3$. Lastly, we 
mention that Luong \emph{et al.} have recently proved the 
well-posedness (under some extra conditions) of system 
\eqref{zr_br_eq_2} in the background of a line-soliton 
\cite{LuMaSa}.

\subsection{Main results}

In the remainder of this work we focus in decay properties for general solutions of \eqref{zr_br_eq} in the energy space. Our first main result states that there exists two specific characteristic curves such that, along them, there is an additional time-integrability property on growing compact sets.

\begin{thm}\label{MT1}
Let $\upsilon_\pm:=\pm\theta^{-1}\big(\sqrt{\beta}\pm\alpha\big)$ fixed. Consider $(\psi, \rho, \eta )\in C(\R, H^1\times L^2\times L^2)$ to be any solution to system \eqref{zr_br_eq} emanating from an initial data $(\psi_0,\rho_0,\eta_0)\in H^1\times L^2\times L^2$. Then, for any $c\in\R_+$, the following inequality holds \[
\int_{0}^{+\infty}\frac{1}{\mu_*(t)}\int_{\Omega_\pm(t)}\vert\psi(t, x)\vert^2
dxdt<+\infty,
\]
where $\Omega_\pm(t):=\{x\in\R: \, -c\lambda(t)\leq x-\upsilon_\pm t \leq c\lambda(t)\}$, $\kappa:=10^{100}$ and
\begin{align*}
\lambda(t)&:=t^{2/3}\log\log^{-2/3}(\kappa+t) \quad \hbox{and}\quad \mu_*(t):=t\log(\kappa+t)\log\log(\kappa+t).
\end{align*}
Furthermore, we have the following scenarios:
\begin{enumerate}[leftmargin=9mm]
\item[$1$.] If $\pm\alpha< 0$, then, the following inequality holds
\begin{align*}
\int_{0}^{+\infty}\frac{1}{\mu_*(t)}\int_{\Omega_\pm(t)}\big(\vert\psi_x(t,x)\vert^2+\vert\psi(t, x)\vert^2 +\rho^2(t, x)+\eta^2(t, x)\big)
dxdt&<+\infty.
\end{align*}
In particular, we have that \[
\liminf_{t\to+\infty}\int_{\Omega_\pm(t)}\big(\vert\psi_x(t,x)\vert^2+\vert\psi(t, x)\vert^2 +\rho^2(t, x)+\eta^2(t, x)\big)
dx=0.
\]
\item[$2$.] If $\alpha=0$,  then, the following inequality holds \begin{align*}
\int_0^{+\infty}\dfrac{1}{\mu_*(t)}\int_{\Omega_0(t)}\Big(\vert\psi_x(t,x)\vert^2+\vert\psi(t,x)\vert^4+\eta^2(t,x)+\rho^2(t,x)\Big)dxdt<+\infty,
\end{align*}
where $\lambda$ and $\mu_*$ defined as above and $\Omega_0(t):=\{x\in\R: \, c\lambda(t)\leq\vert x\vert\leq C\lambda(t)\}$. In particular, the following is satisfied \[
\liminf_{t\to+\infty} \int_{\Omega_0(t)}\Big(\vert\psi_x(t,x)\vert^2+\vert\psi(t,x)\vert^4+\eta^2(t,x)+\rho^2(t,x)\Big)dx=0.
\]
\end{enumerate}

\end{thm}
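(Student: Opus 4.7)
The plan is a virial-type argument tracking the characteristic directions of the hyperbolic subsystem for $(\rho,\eta)$. The $2\times 2$ matrix of that subsystem has eigenvalues $-\alpha\pm\sqrt{\beta}$, so setting $r_{\pm}:=\eta\pm\sqrt{\beta}\rho$ one obtains the decoupled transport equations
\[
\theta\,\partial_{t}r_{\pm}+(\pm\sqrt{\beta}-\alpha)\partial_{x}r_{\pm}
=\Bigl(\tfrac{\alpha}{2}\mp\sqrt{\beta}\Bigr)\gamma\,\partial_{x}\bigl(|\psi|^{2}\bigr),
\]
whose propagation speeds are $\theta^{-1}(\pm\sqrt{\beta}-\alpha)=-\upsilon_{\mp}$. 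In particular, $r_{\mp}$ travels with speed $-\upsilon_{\pm}$, i.e.~opposite to the moving frame centered at $x=\upsilon_{\pm}t$, and this mismatch is the mechanism the argument will exploit to detect decay near these characteristics.

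For each sign I would introduce the mass-type virial
\[
\mathcal{I}_{\pm}(t):=\int_{\R}\phi\!\left(\tfrac{x-\upsilon_{\pm}t}{\lambda(t)}\right)|\psi(t,x)|^{2}\,dx,
\]
with $\phi$ smooth, bounded and increasing, so that $\phi'\geq0$ is a bump localizing to $\Omega_{\pm}(t)$. Mass conservation gives $|\mathcal{I}_{\pm}|\lesssim 1$. Differentiating and using the continuity identity produces a leading coercive contribution $-\upsilon_{\pm}\lambda(t)^{-1}\int\phi'|\psi|^{2}\,dx$ supported on $\Omega_{\pm}(t)$, a moving-frame remainder of order $\lambda'(t)/\lambda(t)$, a dispersive cross term of order $\omega\lambda(t)^{-1}\int\phi'\operatorname{Im}(\bar\psi\psi_{x})\,dx$, and a nonlinear coupling $\gamma\lambda(t)^{-1}\int\phi'(\eta-\tfrac{\alpha}{2}\rho+q|\psi|^{2})|\psi|^{2}\,dx$. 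To neutralize the $(\eta,\rho)$-coupling, I would simultaneously use the hyperbolic virial
\[
\mathcal{J}_{\pm}(t):=\int_{\R}\Phi\!\left(\tfrac{x-\upsilon_{\pm}t}{\lambda(t)}\right) r_{\mp}^{2}\,dx,
\]
whose time derivative produces a coercive term proportional to $\upsilon_{\pm}\lambda(t)^{-1}\int\Phi' r_{\mp}^{2}\,dx$ (the relevant coefficient being the difference between the frame speed $\upsilon_{\pm}$ and the transport speed $-\upsilon_{\pm}$ of $r_{\mp}$), plus a source contribution arising from $\partial_{x}|\psi|^{2}$ which, upon integrating by parts and choosing $\Phi$ as a suitable primitive of $\phi$, cancels the $\gamma\lambda^{-1}\int\phi'(\eta-\tfrac{\alpha}{2}\rho)|\psi|^{2}$ term of $\dot{\mathcal{I}}_{\pm}$ up to absorbable errors.

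After combining $\mathcal{I}_{\pm}$ and $\mathcal{J}_{\pm}$, and estimating residual cross terms by Cauchy--Schwarz against $\|\psi\|_{L^{\infty}}^{2}$ (bounded via the energy-space estimate) and against $\int\phi' r_{\mp}^{2}$ itself, one reaches a differential inequality of the form
\[
\frac{c}{\mu_{*}(t)}\int_{\Omega_{\pm}(t)}|\psi(t,x)|^{2}\,dx \;\leq\; -\frac{d}{dt}\bigl(\mathcal{I}_{\pm}+\mathcal{J}_{\pm}\bigr)(t)+R(t),
\]
with $R\in L^{1}(0,+\infty)$; integrating in $t$ and using the uniform bound on $\mathcal{I}_{\pm}+\mathcal{J}_{\pm}$ (from mass plus $L^{2}$-coercivity of the energy under the hypotheses $\beta>\alpha^{2}$, $\gamma>0$) yields the main integrability statement. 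For assertion (1), I would adjoin a Kato-type smoothing virial $\int\Psi((x-\upsilon_{\pm}t)/\lambda(t))\operatorname{Im}(\bar\psi\psi_{x})\,dx$, whose derivative generates $\int\Psi'|\psi_{x}|^{2}\,dx$ modulo nonlinear couplings with $\rho,\eta$; the sign condition $\pm\alpha<0$ is precisely what makes those residual couplings have the correct sign to be absorbed, and coercivity on $\rho^{2}+\eta^{2}$ then follows from the identity $r_{+}^{2}+r_{-}^{2}=2\eta^{2}+2\beta\rho^{2}$. For assertion (2) with $\alpha=0$ the two characteristic speeds are symmetric around the origin, $\upsilon_{\pm}=\pm\sqrt{\beta}/\theta$; one uses an even weight supported on the annulus $\Omega_{0}(t)$, and the role of the missing $\alpha$-coupling is now played by the quartic energy contribution $\gamma q|\psi|^{4}$, which provides the coercivity needed to close the estimate.

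The main obstacle is the quantitative balance of scales. The moving-frame correction $\lambda'/\lambda$, the dispersive error of order $\lambda^{-1}$ times a localized momentum, and the nonlinear cross terms (each of size $\sim\lambda^{-1}$ after Cauchy--Schwarz) must all be integrable once divided by $\mu_{*}$, while the coercive term $\mu_{*}^{-1}\int_{\Omega_{\pm}}|\psi|^{2}$ has to survive. It is this delicate bookkeeping that forces the triple-log scaling $\mu_{*}(t)=t\log(\kappa+t)\log\log(\kappa+t)$ together with the correction factor $\log\log^{-2/3}(\kappa+t)$ in $\lambda(t)$; the specific constant $\kappa=10^{100}$ is merely cosmetic, chosen so that $\log\log$ is well-defined starting from $t=0$. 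Getting every logarithmic factor to match without any logarithmic loss, in all three virial identities simultaneously, is the most intricate part of the argument.
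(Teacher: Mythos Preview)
There is a genuine gap in your first-stage argument. The mass-type virial $\mathcal{I}_\pm(t)=\int\phi\bigl((x-\upsilon_\pm t)/\lambda\bigr)|\psi|^2$ does \emph{not} generate the nonlinear coupling $\gamma\lambda^{-1}\int\phi'(\eta-\tfrac\alpha2\rho+q|\psi|^2)|\psi|^2$ that you claim: since the potential $\eta-\tfrac\alpha2\rho+q|\psi|^2$ is real, the continuity identity is simply $\partial_t|\psi|^2=-2\omega\,\partial_x\operatorname{Im}(\bar\psi\psi_x)$, so the only non-frame contribution to $\dot{\mathcal{I}}_\pm$ is the dispersive cross term $2\omega\lambda^{-1}\int\phi'\operatorname{Im}(\bar\psi\psi_x)$. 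That term is the real obstruction. Bounded crudely it is $O(\lambda^{-1})=O(t^{-2/3})\notin L^1(\R_+)$; if instead you split it by Young's inequality, the piece $\lambda^{-1}\int\phi'|\psi_x|^2$ is again $O(t^{-2/3})$, because at this stage you have no localized control on $\psi_x$. The cancellation you propose between $\dot{\mathcal{I}}_\pm$ and the source of the quadratic $\dot{\mathcal{J}}_\pm$ therefore never takes place, and your remainder $R(t)$ is not integrable.

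The paper sidesteps this with a different and rather delicate choice: the first-stage functional is \emph{linear} in $(\rho,\eta)$, namely
\[
\mathcal{J}(t)=\frac{\theta}{\mu(t)}\int_\R\bigl(\sqrt\beta\,\rho\pm\eta\bigr)(t,x-\upsilon_\mp t)\,\Phi\Bigl(\tfrac{x}{\lambda_1}\Bigr)\Phi'\Bigl(\tfrac{x}{\lambda_2}\Bigr)\,dx.
\]
Because the transport equations carry the source $\gamma\,\partial_x|\psi|^2$, differentiating such a linear functional and integrating by parts once yields the coercive term $(\mu\lambda_1)^{-1}\int|\psi|^2\Phi'(\tfrac{x}{\lambda_1})\Phi'(\tfrac{x}{\lambda_2})$ \emph{with no occurrence of $\psi_x$ whatsoever}; the remaining errors are all of size $\mu'/\mu^2$, $\lambda_i'/(\mu\lambda_i)$ or $1/(\mu\lambda_2)$, integrable by design. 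A second subtlety you have not addressed is that a linear functional in $(\rho,\eta)\in L^2$ is not bounded against a merely $L^\infty$ weight; this is what forces the extra localizing factor $\Phi'(x/\lambda_2)$ and a two-scale ansatz $\lambda_1\lesssim\lambda_2$, and the resulting Cauchy--Schwarz bound $|\mathcal{J}|\lesssim\lambda_2^{1/2}/\mu$ is exactly what caps $\lambda_1$ at $t^{2/3-}$ and dictates the iterated-log corrections. Only after this first stage is complete does the paper run a momentum-type virial (your ``Kato smoothing'' idea, but with the additional $\rho\eta$ piece) to capture $|\psi_x|^2,\rho^2,\eta^2$, using the now-available integrability of the localized $|\psi|^2$ to control all cross terms; the sign condition $\pm\alpha<0$ enters there, through the positivity of the quadratic form $\tfrac12\eta^2+\tfrac\beta2\rho^2\mp(\sqrt\beta\pm2\alpha)\rho\eta$, essentially as you describe.
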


\begin{rem}
It is important to notice that, as soon as $\alpha\neq 0$ we cannot deduce any time-integrability nor decay property on compacts sets centered at the origin. Of course, this is a consequence of (and consistent with) the existence of the standing-wave solution presented in \eqref{soliton}. On the other hand, when $\alpha=0$, condition  \eqref{condition_soliton_existence} does not allow standing-wave solutions to exists, more specifically, the first inequality in \eqref{condition_soliton_existence} is not satisfied when $c=\alpha=0$, and hence item $2$ is not contradictory with the existence of such family of solutions. %Of course, item $2$ of the above theorem can also be regarded as a proof for the non-existence of any other type of standing (i.e. time independent) wave solution\footnote{Not necessarily of the form $\psi(t,x)= e^{i\lambda t}e^{icx/2\omega}R(x-ct)$ with $\rho \propto \vert R\vert^2$ and $\eta \propto \vert R\vert^2$.} in the energy space.
\end{rem}

Our second main result states that, in the so-called far-field region, solutions (in the energy space) must decay to zero.
\begin{thm}\label{MT2}
	Let $(\psi, \rho, \eta )\in C(\R, H^1\times L^2\times L^2)$ 
	be any solution to system \eqref{zr_br_eq} emanating from 
	an 
	initial data 
	$(\psi_0,\rho_0,\eta_0)\in H^1\times L^2\times L^2$. Then, for any pair of constants $c_1,c_2>0$ the following properties holds:
	\begin{enumerate}[leftmargin=9mm]
	\item[$1$.]	
	 Consider any non-negative function $\zeta \in C^1(\R)$ satisfying that, there exists $\delta>0$ such that, for all $t>0$ it holds
	\[
	\zeta(t)\gtrsim t\log(\kappa+t)^{1+\delta} 
	\quad \text{ and } \quad
	\zeta'(t)
	\gtrsim \log(\kappa+t)^{\delta+1}.
	\]
	Then, setting $\Omega_\zeta(t):=\{x\in\R: \, c_1\zeta(t)\leq \vert x\vert\leq c_2\zeta(t)\}$, the following limit holds
	\begin{equation}\label{MT2Tesis1}
	\lim_{t\to \infty}
	\Vert\psi(t)\Vert_{L^2(\Omega_\zeta(t))}=0.
	\end{equation}
	\item[$2$.] Assume additionally that
	$(\psi, \rho, \eta )\in C(\R, H^2\times H^1\times H^1)$ is a solution emanating from an initial data $(\psi_0,\rho_0,\eta_0)\in H^2\times H^1\times H^1$. Then, for any non-negative $\zeta \in C^1(\R)$ satisfying that, there exists $\delta>0$ such that, for all $t>0$,
	\[
	\zeta(t)\gtrsim t^{2+\delta}
	\quad \text{ and } \quad
	\zeta'(t)
	\gtrsim t^{1+\delta},
	\]
	the 
	following decay for the local energy norm holds
	\[
	\lim_{t\to \infty}
	\left( \Vert\psi(t)\Vert_{H^1(\Omega_\zeta(t))}
	+\Vert\rho(t)\Vert_{L^2(\Omega_\zeta(t))}
	+\Vert\eta(t)\Vert_{L^2(\Omega_\zeta(t))}\right)=0.
	\] 
	\end{enumerate}

\end{thm}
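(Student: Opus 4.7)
The proof of Theorem \ref{MT2} is based on weighted virial-type identities with time-dependent weights of the form $\phi(x/\zeta(t))$, coupled with the integrability result of Theorem \ref{MT1}. The starting observation is that the potential $\gamma(\eta-\tfrac{1}{2}\alpha\rho+q|\psi|^2)$ in the $\psi$-equation is real, so
\[
\partial_t|\psi|^2 \;=\; 2\omega\,\partial_x\Im(\bar\psi\,\partial_x\psi),
\]
and the $L^2$-density $|\psi|^2$ satisfies a conservation law just as for free Schr\"odinger; this decouples the $L^2$-evolution of $\psi$ from $(\rho,\eta)$ and is what makes Part 1 tractable under the mere energy regularity.

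For Part 1, fix $c>0$ and let $\phi\in C^\infty(\R;[0,1])$ be an even, non-decreasing-on-$\R_+$ cutoff with $\phi\equiv 0$ on $[-1/2,1/2]$ and $\phi\equiv 1$ on $\{|y|\geq 1\}$. Set $F_c(t) := \int_\R |\psi(t,x)|^2\,\phi\bigl(x/(c\zeta(t))\bigr)\,dx$. Using the conservation law above and integration by parts,
\[
F_c'(t) = -\frac{\zeta'(t)}{c\zeta(t)^2}\int_\R x\,\phi'\!\left(\tfrac{x}{c\zeta(t)}\right)|\psi|^2\,dx \;-\; \frac{2\omega}{c\zeta(t)}\int_\R \phi'\!\left(\tfrac{x}{c\zeta(t)}\right)\Im(\bar\psi\partial_x\psi)\,dx.
\]
The first term is non-positive by the parity and monotonicity of $\phi$ (since $x\phi'(x/\cdot)\geq 0$), while the second is bounded in absolute value by $C\zeta(t)^{-1}\|\psi_0\|_{L^2}\|\partial_x\psi(t)\|_{L^2}$; since $\|\partial_x\psi\|_{L^2}$ is controlled by energy conservation and $\zeta(t)^{-1}$ is integrable on $\R_+$ by the hypothesis $\zeta(t)\gtrsim t\log^{1+\delta}(\kappa+t)$, we conclude that $F_c(t)$ admits a limit $F_c^\infty\in[0,\|\psi_0\|_{L^2}^2]$ and, moreover,
\[
\int_0^\infty \frac{\zeta'(t)}{\zeta(t)}\int_{|x|\sim c\zeta(t)}|\psi(t,x)|^2\,dx\,dt < +\infty.
\]
Since $\|\psi(t)\|_{L^2(\Omega_\zeta(t))}^2\leq F_{c_1}(t)-F_{c_2}(t)$, the result \eqref{MT2Tesis1} reduces to showing that $F_c^\infty$ is independent of $c$; the above integrability precludes concentration of $|\psi|^2$ on any level set $\{|x|=c\zeta(t)\}$ and thus yields continuity of $c\mapsto F_c^\infty$, which combined with Theorem \ref{MT1} --- forcing, along a suitable sequence $t_n\to+\infty$, the mass $\|\psi(t_n)\|_{L^2}^2$ to be essentially captured by the characteristic bands $\Omega_\pm(t_n)$ of width $\lambda(t_n)\lesssim t_n^{2/3}\ll\zeta(t_n)$ --- forces $F_c^\infty=0$ for every $c>0$.

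For Part 2, under the stronger $H^2\times H^1\times H^1$ regularity, I would work with the weighted energy
\[
E_\phi(t) \;:=\; \int_\R \Big(|\psi_x|^2 + |\psi|^2 + \rho^2 + \eta^2\Big)\phi\bigl(x/\zeta(t)\bigr)\,dx
\]
and derive a differential identity via the full ZR/BR system. Commuting $\partial_x$ with the $\psi$-equation produces source terms involving $\partial_x\rho,\partial_x\eta$ and $\partial_x(|\psi|^2\psi)$, controlled via Sobolev embedding and the polynomial $H^s$-growth estimate $\|\psi(t)\|_{H^2}\lesssim 1+|t|$ from \cite{LiMa}. The $(\rho,\eta)$-subsystem is symmetric hyperbolic with characteristic speeds $\upsilon_\pm$, whose weighted energy identity generates additional flux terms of order $|\upsilon_\pm|\zeta(t)^{-1}\lesssim t^{-2-\delta}$, integrable in $t$. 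The strengthened hypothesis $\zeta(t)\gtrsim t^{2+\delta}$ is precisely what makes all the error terms lie in $L^1(dt)$ in spite of the polynomial $H^2$-growth of the solution.

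The main technical obstacle, present throughout, is to upgrade the natural \emph{boundedness} of $F_c(t)$ (resp.\ $E_\phi(t)$) to actual \emph{decay}. The weighted virial identity alone only yields convergence to some \emph{a priori} nonnegative limit; eliminating strictly positive limits critically relies on coupling with Theorem \ref{MT1}. Since $\zeta(t)/t\to+\infty$, the far-field region $\Omega_\zeta(t)$ lies strictly outside the characteristic cones $\{|x-\upsilon_\pm t|\leq c\lambda(t)\}$, so the subsequential localization of mass on these cones furnished by Theorem \ref{MT1}, combined with the near-monotonicity of $F_c(t)$ (resp.\ $E_\phi(t)$), rules out any persistent mass in the far-field. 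I expect this coupling step, together with the delicate balancing of weighted energy against the source terms coupled to $\|\psi\|_{H^2}$ in Part 2, to be the technically most involved portion of the proof.
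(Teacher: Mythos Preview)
Your virial setup for Part~1 is essentially correct and mirrors the paper's: the conservation law $\partial_t|\psi|^2 = 2\omega\,\partial_x\Im(\bar\psi\psi_x)$ gives near-monotonicity of the weighted mass $F_c(t)$, hence convergence of $F_c(t)$ to some limit, together with the time-integrability
\[
\int_0^\infty \frac{\zeta'(t)}{\zeta(t)}\int_{|x|\sim c\zeta(t)}|\psi(t,x)|^2\,dx\,dt < +\infty.
\]
However, your mechanism for eliminating a strictly positive limit is flawed. You invoke Theorem~\ref{MT1} as ``forcing the mass $\|\psi(t_n)\|_{L^2}^2$ to be essentially captured by the characteristic bands $\Omega_\pm(t_n)$,'' but Theorem~\ref{MT1} says the \emph{opposite}: the weighted mass \emph{on} those bands is time-integrable, hence small along a subsequence. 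It gives no information about where mass sits outside those bands, and in particular cannot rule out mass in the far-field region $\{|x|\sim\zeta(t)\}$. Continuity of $c\mapsto F_c^\infty$ does not yield constancy, and nothing you have established forces $F_c^\infty=0$.

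The paper's proof of Theorem~\ref{MT2} does not use Theorem~\ref{MT1} at all; it is self-contained. The device replacing your appeal to Theorem~\ref{MT1} is a \emph{two-weight bootstrap}. First one runs the virial with a monotone step-like weight $\Phi$ (with $\Phi'\le 0$ and $s\Phi'(s)\ge 0$), so that both terms on the right of the identity have a sign; this yields the integrability above and, since $\zeta'/\lambda\notin L^1(\R_+)$, a sequence $t_n\to\infty$ along which $\int|\Phi'(\tfrac{\pm x+\zeta}{\lambda})|\,|\psi(t_n)|^2\to 0$. Then one reruns the virial with a compactly supported weight $\Psi$ satisfying $\supp\Psi\subset\supp\Phi'$ and $|\Psi'|\lesssim|\Phi'|$. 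For this $\Psi$ the right-hand side of the virial identity is dominated in absolute value by quantities already shown to lie in $L^1(dt)$; integrating on $[t,t_n]$ and letting $t_n\to\infty$ gives $\int\Psi(\tfrac{\pm x+\zeta}{\lambda})|\psi(t)|^2\to 0$. The same scheme, applied to a localized energy functional and using the polynomial bound $\|\psi(t)\|_{H^2}\lesssim 1+|t|$ from \cite{LiMa} to make the $\overline{\psi}_x\psi_{xx}$ term integrable, yields Part~2. Your identification of the role of the $H^2$ growth estimate and of the symmetric-hyperbolic structure is correct, but the actual decay mechanism is the two-weight argument above, not any coupling with Theorem~\ref{MT1}.
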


\begin{rem}
Note that none of the above theorems require any smallness assumption in terms of the initial data $\Vert \psi_0\Vert_{H^1}\ll 1$. Moreover, they do not require any parity assumption either (as their counterparts founded in \cite{Ma,Ma2}), nor any extra decay hypotheses in terms of weighted Sobolev norms, such as 
$\Vert x\psi\Vert_{L^2}\ll 1$ for example.
\end{rem}

\begin{rem}\label{Parity_smallness_asumptions}
One important difference between the results above and those in 
\cite{Ma,Ma2} is that, in both of those works, the equations 
under study preserve the oddness of the initial data (for the Schr\"odinger component $\psi$), while 
system \eqref{zr_br_eq} does not. Hence, the analysis presented 
there assuming parity conditions on the initial data cannot be applied to system \eqref{zr_br_eq}.
\end{rem}

Finally, it is worth mentioning 
that the techniques involved in the proof of Theorems 
\ref{MT1} and \ref{MT2} have already been used before in some 
other contexts. We refer to \cite{Ma} for the use of some of 
these ideas in context of the one-dimensional Schr\"odinger 
equation, and to \cite{Ma2} for scalar Zakharov system (as 
well as the Klein-Gordon Zakharov system). On the other hand, 
for other type of systems that have served us for motivations 
we refer to \cite{KwMu,LiMe,MuPoSa}. However, as previously 
described, 
system \eqref{zr_br_eq} has some important 
differences with respect to the above cases 
(see Remark \ref{Parity_smallness_asumptions}), 
what does not allow us to apply the same ideas. 
In particular, the presence of some transport equations in 
\eqref{zr_br_eq} breaks the symmetry properties used in 
previous works to study Schr\"odinger-type equations/systems 
with these specific techniques.

\section{Preliminary lemmas}

\subsection{Virial identities}
In this section we seek to establish the key virial identities required 
in our analysis. In order to do that we consider the following weight 
function
\[
\Phi(x):=\tanh(x), \quad \hbox{ and hence } \quad  \Phi'= \sech^2(x).
\]
Additionally, we consider time-dependent scaling functions 
$\lambda_1(t)$, $\lambda_2(t)$ and $\mu(t)$ given by 
\begin{align}\label{def_scalings_time}
\lambda_1(t)&:=(\kappa+t)^{2/3}\log\log^{-2/3}(\kappa+t),\nonumber
\\ \lambda_2(t)&:=(\kappa +t)^{2/3}\log\log^{1/3}(\kappa+t),
\\ \mu(t)&:=(\kappa+t)^{1/3}\log(\kappa+t)\log\log^{5/3}(\kappa+t),\nonumber
\end{align}
where $\kappa:=10^{100}$. The role of $\mu(t)$ and $\lambda_i(t)$ is to provide some extra time-decay 
so that we can somehow neglect bad terms (with no sign) that prevent us to conclude the properties claimed in the above theorems. Additionally, one can think of $\lambda_1(t)$ as the rate of growth of the set $\Omega_\pm(t)$ and $\Omega_0(t)$ defined in Theorem \ref{MT1}. The key idea for considering exactly these definitions for 
$\mu(t)$ and $\lambda_i(t)$ is that 
\[
\dfrac{1}{\mu(t)\lambda_2(t)},\,\dfrac{\lambda_i'(t)}{\mu(t)\lambda_i(t)},\, 
\dfrac{\mu'(t)}{\mu^2(t)}\in L^1(\R_+), 
\quad \hbox{ while } 
\quad \dfrac{1}{\mu(t)\lambda_1(t)}\notin L^1(\R_+).
\]
In the sequel we shall exploit these two properties. Moreover, for the 
sake of simplicity we introduce the following useful notation 
\begin{align}\label{def_upsilon_pm}
\upsilon_-:=-\theta^{-1}\big(\sqrt{\beta}-\alpha\big) 
\quad \hbox{ and }\quad \upsilon_+ 
:=\theta^{-1}\big(\sqrt{\beta}+\alpha\big).
\end{align}
Then, with all of the above notations, we define the \emph{modified mean} functionals $\mathcal{J}_1(t)$ 
and $\mathcal{J}_2(t)$, adapted to the curves $x-\upsilon_\pm t$, which are given by \begin{align*}
\mathcal{J}_1(t)&:=
\dfrac{\theta}{\mu(t)}\int_\R\Big(\sqrt{\beta}\rho
\big(t,x-\upsilon_- t\big)+\eta\big(t,x-\upsilon_- t\big)\Big)
\Phi\left(\dfrac{x}{\lambda_1(t)}\right)
\Phi'\left(\dfrac{x}{\lambda_2(t)}\right)dx,
\\ 
\mathcal{J}_2(t)&
:=\dfrac{\theta}{\mu(t)}\int_\R \Big(\sqrt{\beta}\rho\big(t,x-\upsilon_+t\big)-\eta\big(t,x-\upsilon_+t\big)\Big)
\Phi\left(\dfrac{x}{\lambda_1(t)}\right)
\Phi'\left(\dfrac{x}{\lambda_2(t)}\right)dx.
\end{align*}
The reason why evaluating solutions on these translated points 
$x-\upsilon_\pm t$ is to be able to take advantage of the 
characteristics of the underlying transport equations associated to 
\eqref{zr_br_eq}. Moreover, another key quantity that shall play a 
fundamental role in our proof is the \emph{modified momentum} functional 
$\mathcal{I}(t)$, which is given by
\[
\mathcal{I}(t):=
\dfrac{1}{\mu(t)}\mathrm{Im}\int_\R\psi(t,x)\overline{\psi_x}(t,x)
\Phi\left(\dfrac{x}{\lambda_1(t)}\right)dx-\dfrac{\theta}{\mu(t)}
\int_\R\rho(t,x)\eta(t,x)\Phi\left(\dfrac{x}{\lambda_1(t)}\right)dx.
\]
For the sake of simplicity and the clarity of computations, we split the 
previous functional into two parts, namely,
\begin{align*}
\mathcal{I}_1(t)
&:=\dfrac{1}{\mu(t)}\mathrm{Im}\int_\R\psi(t,x)\overline{\psi_x}(t,x)
\Phi\left(\dfrac{x}{\lambda_1(t)}\right)dx,
\\ \mathcal{I}_2(t)&:=\dfrac{\theta}{\mu(t)}
\int_\R\rho(t,x)\eta(t,x)\Phi\left(\dfrac{x}{\lambda_1(t)}\right)dx.
\end{align*}
We anticipate that, thanks to the explicit form of $\Phi$ and
 the conservation of the momentum and the energy, 
 if $(\psi,\rho,\eta)$ is a solution to system 
\eqref{zr_br_eq} belonging to the class 
$C(\R,H^1\times L^2\times L^2)$, 
then, all the modified functionals above 
$\mathcal{J}_1(t)$, $\mathcal{J}_2(t)$ and $\mathcal{I}(t)$ 
are well defined 
for all times $t\in\R$ 
(see Lemma \ref{uniform_bound_mod_functionals} 
for further details). 

\medskip

The following three lemmas give us the first basic virial 
identities satisfied by the modified functionals $\mathcal{J}_1(t)$, $\mathcal{J}_2(t)$ and $\mathcal{I}(t)$ above.
\begin{lem}\label{virial_identity_lemma}
	Let $(\psi,\rho,\eta)\in C(\R,H^1\times L^2\times L^2)$ be any 
	solution to system \eqref{zr_br_eq}. Then, for all $t\in\R$, the 
	following identity holds
	\begin{align}\label{di_dt}
	-\dfrac{d}{dt}\mathcal{I}(t)&=\dfrac{2\omega}{\mu\lambda_1}
	\int \vert\psi_x\vert^2\Phi'-\dfrac{\omega}{2\mu\lambda_1^3}
	\int \vert\psi\vert^2\Phi'''+\dfrac{1}{2\mu\lambda_1 }
	\int \eta^2\Phi'+\dfrac{\beta}{2\mu\lambda_1 }
	\int \rho^2\Phi' \nonumber
	\\ & \quad  +\dfrac{3\gamma q}{4\mu\lambda_1}
	\int \vert\psi\vert^4\Phi'
	-\dfrac{\alpha}{\mu\lambda_1 }\int \rho\eta\Phi'
	+\dfrac{\gamma}{\mu\lambda_1 }\int 
	\big(\eta-\tfrac{\alpha}{2}\rho\big)\vert\psi\vert^2\Phi'
	-\dfrac{\mu'}{\mu^2}\int\rho\eta\Phi
	\\ &\quad 
	+\dfrac{\mu'}{\mu^2}\mathrm{Im}\int\psi\overline{\psi_x}\Phi
	+\dfrac{\lambda_1'}{\mu\lambda_1}\mathrm{Im}
	\int\left(\dfrac{x}{\lambda_1}\right)\psi\overline{\psi}_x\Phi'
	-\dfrac{\lambda_1'}{\mu\lambda_1 }\int\left(\dfrac{x}{\lambda_1}
	\right)\rho\eta\Phi'. \nonumber
	\end{align}
\end{lem}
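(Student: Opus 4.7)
The plan is to write $\mathcal{I}(t)=\mathcal{I}_1(t)-\mathcal{I}_2(t)$ and differentiate each piece by the Leibniz rule, grouping the three sources of time dependence that appear: (i) the scalar prefactor $\mu(t)^{-1}$, whose derivative produces the line of terms proportional to $\mu'/\mu^2$; (ii) the integrand itself, whose time derivatives are substituted via system \eqref{zr_br_eq}; and (iii) the weight $\Phi(x/\lambda_1(t))$, for which the chain rule gives $\partial_t\Phi(x/\lambda_1)=-(\lambda_1'/\lambda_1)(x/\lambda_1)\Phi'(x/\lambda_1)$ and hence produces the two transport terms weighted by $(x/\lambda_1)\Phi'$ in the last line of the identity.

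For $\mathcal{I}_1$ I substitute $\psi_t=i\omega\psi_{xx}-iV\psi$, with $V:=\gamma(\eta-\tfrac{\alpha}{2}\rho+q|\psi|^2)$, into $\partial_t(\psi\overline{\psi_x})$ and take the imaginary part; using the standard algebraic identities $\mathrm{Re}(\psi_{xx}\overline{\psi_x})=\tfrac12\partial_x|\psi_x|^2$ and $\mathrm{Re}(\psi\overline{\psi_{xx}})=\tfrac12\partial_x^2|\psi|^2-|\psi_x|^2$, this leads to the local Morawetz-type identity
\[
\partial_t\,\mathrm{Im}(\psi\overline{\psi_x})=2\omega\,\partial_x|\psi_x|^2-\tfrac{\omega}{2}\partial_x^3|\psi|^2+V_x|\psi|^2.
\]
Multiplying by $\Phi(x/\lambda_1)/\mu$ and integrating by parts in $x$ (using $\partial_x^k\Phi(x/\lambda_1)=\lambda_1^{-k}\Phi^{(k)}(x/\lambda_1)$) produces the $|\psi_x|^2\Phi'$ and $|\psi|^2\Phi'''$ terms of the statement, plus a cross-coupling contribution coming from $V_x|\psi|^2$, which I further expand via $V_x=\gamma\eta_x-\tfrac{\alpha\gamma}{2}\rho_x+\gamma q\,\partial_x|\psi|^2$ and integrate by parts once more. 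For $\mathcal{I}_2$ I compute $\theta\,\partial_t(\rho\eta)=\theta\eta\rho_t+\theta\rho\eta_t$ directly from the two transport equations in \eqref{zr_br_eq}; the key observation is that the linear part organizes as a pure divergence,
\[
\theta\,\partial_t(\rho\eta)=-\tfrac12\partial_x(\eta^2)-\tfrac{\beta}{2}\partial_x(\rho^2)+\alpha\,\partial_x(\rho\eta)-\gamma\eta\,\partial_x|\psi|^2+\tfrac{\alpha\gamma}{2}\rho\,\partial_x|\psi|^2,
\]
so that after multiplication by $\Phi(x/\lambda_1)/\mu$ and integration by parts one recovers the $\eta^2\Phi'$, $\rho^2\Phi'$ and $\rho\eta\Phi'$ terms of the identity, together with a bulk coupling piece $-(\gamma/\mu)\int(\eta-\tfrac{\alpha}{2}\rho)\partial_x|\psi|^2\,\Phi(x/\lambda_1)$.

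The main obstacle is then the careful sign bookkeeping in the combination $-\tfrac{d}{dt}\mathcal{I}=-\tfrac{d}{dt}\mathcal{I}_1+\tfrac{d}{dt}\mathcal{I}_2$. The ``bulk'' cross-coupling integrals $\int(\eta-\tfrac{\alpha}{2}\rho)\partial_x|\psi|^2\,\Phi(x/\lambda_1)$ produced on the Schr\"odinger side (from $V_x|\psi|^2$) and on the acoustic side (from the $|\psi|^2$-forcing of the transport equations) appear with opposite signs and must cancel exactly; this cancellation is the manifestation at the level of this virial identity of the Hamiltonian structure of \eqref{zr_br_eq}. Only the ``boundary'' pieces carrying $\Phi'/\lambda_1$ survive, and collecting them together with the purely cubic contribution coming from $\gamma q\,\partial_x|\psi|^2\cdot|\psi|^2=\tfrac{\gamma q}{2}\partial_x|\psi|^4$ produces the stated $3\gamma q/4$ and $\gamma$ coefficients in front of the $|\psi|^4\Phi'$ and $(\eta-\tfrac{\alpha}{2}\rho)|\psi|^2\Phi'$ terms, respectively. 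All remaining terms in the identity come from the prefactor and weight-derivative contributions (i) and (iii), applied to $\mathcal{I}_1$ and $\mathcal{I}_2$ separately.
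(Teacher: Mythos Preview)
Your proposal is correct and follows essentially the same approach as the paper: both split $\mathcal{I}=\mathcal{I}_1-\mathcal{I}_2$, differentiate each piece using the equations of \eqref{zr_br_eq}, and integrate by parts, with the only organizational difference being that you first derive the pointwise identities $\partial_t\,\mathrm{Im}(\psi\overline{\psi_x})=2\omega\partial_x|\psi_x|^2-\tfrac{\omega}{2}\partial_x^3|\psi|^2+V_x|\psi|^2$ and $\theta\partial_t(\rho\eta)=-\tfrac12\partial_x\eta^2-\tfrac{\beta}{2}\partial_x\rho^2+\alpha\partial_x(\rho\eta)-\gamma(\eta-\tfrac{\alpha}{2}\rho)\partial_x|\psi|^2$ before integrating, whereas the paper works directly at the integral level. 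Your explicit identification of the cancellation of the bulk coupling terms $\int(\eta-\tfrac{\alpha}{2}\rho)_x|\psi|^2\Phi$ between the Schr\"odinger and acoustic sides is exactly what the paper encodes in its final line ``gathering both previous identities we conclude the desired result.''
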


\begin{proof}
	The proof is somehow straightforward and follows 
	from direct computations; we shall only proceed formally. 
	Notice that the following reasoning can be made 
	rigorously by standards
	approximation and density arguments. 
	
	\medskip

	\par Directly 
	differentiating the definition of the functional 
	$\mathcal{I}_1$, using system \eqref{zr_br_eq} and 
	performing several integration by parts we obtain
	\begin{align*}
	\dfrac{d}{dt}\mathcal{I}_1(t)&=\dfrac{1}{\mu}\mathrm{Im}\int \psi_t\overline{\psi_x}\Phi+\dfrac{1}{\mu}\mathrm{Im}\int \psi\overline{\psi_{tx}}\Phi-\dfrac{\lambda'}{\mu\lambda}\mathrm{Im}\int \left(\dfrac{x}{\lambda}\right)\psi\overline{\psi_x}\Phi'-\dfrac{\mu'}{\mu^2}\mathrm{Im}\int\psi\overline{\psi_x}\Phi
	\\ & = \dfrac{2}{\mu}\mathrm{Im}\int \psi_t\overline{\psi}_x\Phi  -  \dfrac{1}{\mu\lambda}\mathrm{Im}\int \psi\overline{\psi_t}\Phi'-\dfrac{\lambda'}{\mu\lambda}\mathrm{Im}\int \left(\dfrac{x}{\lambda}\right)\psi\overline{\psi_x}\Phi'-\dfrac{\mu'}{\mu^2}\mathrm{Im}\int\psi\overline{\psi_x}\Phi
	\\ & = \dfrac{2}{\mu}\mathrm{Re} \int \Big( \omega  \psi_{xx}-\gamma \big(\eta-\tfrac{\alpha}{2}\rho+q\vert\psi\vert^2\big)\psi\Big)\overline{\psi_x}\Phi-\dfrac{\mu'}{\mu^2}\mathrm{Im}\int\psi\overline{\psi_x}\Phi
	\\ & \qquad +\dfrac{1}{\mu\lambda}\mathrm{Re}\int \Big(\omega  \overline{\psi_{xx}}-\gamma \big(\eta-\tfrac{\alpha}{2}\rho+q\vert\psi\vert^2\big)\overline{\psi}\Big)\psi\Phi'-\dfrac{\lambda'}{\mu\lambda}\mathrm{Im}\int \left(\dfrac{x}{\lambda}\right)\psi\overline{\psi_x}\Phi'
	\\ & = -\dfrac{2\omega}{\mu\lambda}\int \vert\psi_x\vert^2\Phi'+\dfrac{\gamma}{\mu\lambda}\int \Big(\eta-\tfrac{\alpha}{2}\rho+{\tfrac{q}{4}\vert\psi\vert^2}\Big)\vert\psi\vert^2\Phi'-\dfrac{\mu'}{\mu^2}\mathrm{Im}\int\psi\overline{\psi_x}\Phi
	\\ & \qquad +\dfrac{\gamma}{\mu}\int \big(\eta_x-\tfrac{\alpha}{2}\rho_x\big)\vert\psi\vert^2\Phi+\dfrac{\omega}{2\mu\lambda^3}\int \vert\psi\vert^2\Phi''' -\dfrac{\lambda'}{\mu\lambda}\mathrm{Im}\int\left(\dfrac{x}{\lambda}\right)\psi\overline{\psi}_x\Phi'
	\\ & \qquad -\dfrac{\gamma}{\mu\lambda}\int \big(\eta-\tfrac{\alpha}{2}\rho+q\vert\psi\vert^2\big)\vert\psi\vert^2\Phi'
	\\ & = -\dfrac{2\omega}{\mu\lambda}\int \vert\psi_x\vert^2\Phi'-\dfrac{\mu'}{\mu^2}\mathrm{Im}\int\psi\overline{\psi_x}\Phi+\dfrac{\gamma}{\mu}\int \big(\eta_x-\tfrac{\alpha}{2}\rho_x\big)\vert\psi\vert^2\Phi
	\\ & \qquad +\dfrac{\omega}{2\mu\lambda^3}\int \vert\psi\vert^2\Phi''' -\dfrac{3\gamma q}{4\mu\lambda}\int \vert\psi\vert^4\Phi'-\dfrac{\lambda'}{\mu\lambda}\mathrm{Im}\int\left(\dfrac{x}{\lambda}\right)\psi\overline{\psi}_x\Phi'
	\end{align*}
Now we compute the time-derivative of the second functional $\mathcal{I}_2$.
In fact, by direct differentiation again, using system \eqref{zr_br_eq} and 
performing several integration by parts we get
	\begin{align*}
	\dfrac{d}{dt}\mathcal{I}_2&=\dfrac{1}{\mu}\int \rho_t\eta\Phi+\dfrac{1}{\mu}\int \rho\eta_t\Phi-\dfrac{\lambda '}{\mu\lambda }\int\left(\dfrac{x}{\lambda }\right)\rho\eta\Phi'-\dfrac{\mu'}{\mu^2}\int\rho\eta\Phi
	\\ & = \dfrac{1}{2\mu\lambda }\int \eta^2\Phi'+\dfrac{\alpha}{\mu}\int \rho_x\eta\Phi+\dfrac{\gamma}{\mu}\int \eta_x\vert\psi\vert^2\Phi+\dfrac{\gamma}{\mu\lambda }\int \eta\vert\psi\vert^2\Phi'
	\\ & \qquad +\dfrac{\beta}{2\mu\lambda }\int \rho^2\Phi'+\dfrac{\alpha}{\mu}\int \rho\eta_x\Phi-\dfrac{\alpha\gamma}{2\mu}\int \rho_x\vert\psi\vert^2\Phi-\dfrac{\alpha\gamma}{2\mu\lambda }\int \rho\vert\psi\vert^2\Phi'
	\\ &= \dfrac{1}{2\mu\lambda }\int \eta^2\Phi'-\dfrac{\alpha}{\mu\lambda }\int \rho\eta\Phi'+\dfrac{\gamma}{\mu}\int \eta_x\vert\psi\vert^2\Phi+\dfrac{\gamma}{\mu\lambda }\int \eta\vert\psi\vert^2\Phi'-\dfrac{\mu'}{\mu^2}\int\rho\eta\Phi
	\\ & \qquad +\dfrac{\beta}{2\mu\lambda }\int \rho^2\Phi'-\dfrac{\alpha\gamma}{2\mu}\int \rho_x\vert\psi\vert^2\Phi-\dfrac{\alpha\gamma}{2\mu\lambda }\int \rho\vert\psi\vert^2\Phi'-\dfrac{\lambda '}{\mu\lambda }\int\left(\dfrac{x}{\lambda }\right)\rho\eta\Phi'.
	\end{align*}
	Hence, gathering both previous identities we conclude the desired result.
\end{proof}

\begin{lem}\label{virial_j1}
Let $(\psi,\rho,\eta)\in C(\R,H^1\times L^2\times L^2)$ 
be any 
solution to system \eqref{zr_br_eq}. 
Then, for all $t\in\R$, 
the following identities hold:
\begin{align*}
\mathit{(1)} \ \dfrac{d}{dt}\mathcal{J}_1(t)&=%
\dfrac{\gamma(2\sqrt{\beta}-\alpha)}{\mu(t)\lambda_1(t)}\int \big\vert\psi\big (t,x-\upsilon_- t\big)\big\vert^2\Phi'\left(\dfrac{x}{\lambda_1(t)}\right)\Phi'\left(\dfrac{x}{\lambda_2(t)}\right)
\\ & \quad +\dfrac{\gamma (2\sqrt{\beta}-\alpha)}{\mu(t)\lambda_2(t)}\int \big\vert\psi\big (t,x-\upsilon_- t\big)\big\vert^2\Phi\left(\dfrac{x}{\lambda_1(t)}\right)\Phi''\left(\dfrac{x}{\lambda_2(t)}\right)
\\ & \quad -\dfrac{\theta\mu'(t)}{\mu^2(t)}\int \big(\sqrt{\beta}\rho-\eta\big)(t,x-\upsilon_- t)\Phi\left(\dfrac{x}{\lambda_1(t)}\right)\Phi'\left(\dfrac{x}{\lambda_2(t)}\right)
\\ & \quad -\dfrac{\theta\lambda_1'(t)}{\mu(t)\lambda_1(t)}\int \left(\dfrac{x}{\lambda_1(t)}\right)\big(\sqrt{\beta}\rho-\eta\big)(t,x-\upsilon_- t)\Phi'\left(\dfrac{x}{\lambda_1(t)}\right)\Phi'\left(\dfrac{x}{\lambda_2(t)}\right)
\\ & \quad -\dfrac{\theta\lambda_2'(t)}{\mu(t)\lambda_2(t)}\int \left(\dfrac{x}{\lambda_2(t)}\right)\big(\sqrt{\beta}\rho-\eta\big)(t,x-\upsilon_- t)\Phi\left(\dfrac{x}{\lambda_1(t)}\right)\Phi''\left(\dfrac{x}{\lambda_2(t)}\right).
\end{align*}
\begin{align*}
	\mathit{(2)} \ 
	\dfrac{d}{dt}\mathcal{J}_2(t)&=%
	\dfrac{\gamma(\sqrt{\beta}-\alpha)}{\mu(t)\lambda_1(t)}\int _\R \big\vert\psi\big (t,x-\upsilon_+ t\big)\big\vert^2\Phi'\left(\dfrac{x}{\lambda_1(t)}\right)\Phi'\left(\dfrac{x}{\lambda_2(t)}\right)
	\\ & \quad +\dfrac{\gamma (\sqrt{\beta}-\alpha)}{\mu(t)\lambda_2(t)}\int_\R \big\vert\psi\big (t,x-\upsilon_+ t\big)\big\vert^2\Phi\left(\dfrac{x}{\lambda_1(t)}\right)\Phi''\left(\dfrac{x}{\lambda_2(t)}\right)
	\\ & \quad -\dfrac{\theta\mu'(t)}{\mu^2(t)}\int \big(\sqrt{\beta}\rho-\eta\big)(t,x-\upsilon_+ t)\Phi\left(\dfrac{x}{\lambda_1(t)}\right)\Phi'\left(\dfrac{x}{\lambda_2(t)}\right)
	\\ & \quad -\dfrac{\theta\lambda_1'(t)}{\mu(t)\lambda_1(t)}\int \left(\dfrac{x}{\lambda_1(t)}\right)\big(\sqrt{\beta}\rho-\eta\big)(t,x-\upsilon_+ t)\Phi'\left(\dfrac{x}{\lambda_1(t)}\right)\Phi'\left(\dfrac{x}{\lambda_2(t)}\right)
	\\ & \quad -\dfrac{\theta\lambda_2'(t)}{\mu(t)\lambda_2(t)}\int \left(\dfrac{x}{\lambda_2(t)}\right)\big(\sqrt{\beta}\rho-\eta\big)(t,x-\upsilon_+ t)\Phi\left(\dfrac{x}{\lambda_1(t)}\right)\Phi''\left(\dfrac{x}{\lambda_2(t)}\right).
	\end{align*}
\end{lem}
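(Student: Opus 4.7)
The plan is to compute the time derivatives of $\mathcal{J}_1$ and $\mathcal{J}_2$ directly, paralleling the derivation carried out for $\mathcal{I}$ in Lemma \ref{virial_identity_lemma}. I would argue formally throughout, noting that the manipulations can be justified by standard density and approximation arguments based on the local well-posedness theory of \cite{Ol,LiMa}.

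First, I would differentiate $\mathcal{J}_1(t)$ using the product rule and the chain rule. Three groups of terms arise: one from differentiating the scaling factor $1/\mu(t)$ (producing the $\mu'/\mu^2$ contribution), one from differentiating the weights $\Phi(x/\lambda_1(t))$ and $\Phi'(x/\lambda_2(t))$ in $t$ (producing the $\lambda_i'/\lambda_i$ contributions), and a principal term where $\partial_t$ hits $(\sqrt{\beta}\rho+\eta)(t,x-\upsilon_-t)$. The weight derivatives are immediate since $\partial_t\Phi(x/\lambda_i(t))=-(\lambda_i'/\lambda_i)(x/\lambda_i)\Phi'(x/\lambda_i)$, with an analogous formula for $\partial_t\Phi'(x/\lambda_2(t))$; these produce exactly the $\lambda_1'$ and $\lambda_2'$ lines of the stated identity.

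The central algebraic step is the diagonalization of the $(\rho,\eta)$ subsystem. Multiplying the density equation of \eqref{zr_br_eq} by $\sqrt{\beta}$ and adding the potential equation, a direct computation yields
\[
\theta\,\partial_t(\sqrt{\beta}\rho+\eta)+(\sqrt{\beta}-\alpha)\,\partial_x(\sqrt{\beta}\rho+\eta)=-\gamma\bigl(\sqrt{\beta}-\tfrac{\alpha}{2}\bigr)\partial_x\bigl(|\psi|^2\bigr),
\]
which is a transport equation with wave speed exactly $-\upsilon_-=\theta^{-1}(\sqrt{\beta}-\alpha)$. Consequently, setting $U(t,x):=(\sqrt{\beta}\rho+\eta)(t,x-\upsilon_-t)$, the transport part cancels under the characteristic shift, giving
\[
\partial_t U(t,x)=-\tfrac{\gamma}{\theta}\bigl(\sqrt{\beta}-\tfrac{\alpha}{2}\bigr)\bigl(\partial_x|\psi|^2\bigr)(t,x-\upsilon_-t).
\]
Substituting this into the principal term and integrating by parts in $x$ moves the spatial derivative onto the weight product $\Phi(x/\lambda_1)\Phi'(x/\lambda_2)$, producing the two $|\psi|^2$ terms with factors $\Phi'\Phi'/\lambda_1$ and $\Phi\Phi''/\lambda_2$ on the first two lines of the identity.

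The proof of the second identity is completely analogous, the crucial observation being that the complementary diagonal variable $\sqrt{\beta}\rho-\eta$ satisfies a transport equation with speed precisely $\upsilon_+=\theta^{-1}(\sqrt{\beta}+\alpha)$, which is again annihilated by the shift $x\mapsto x-\upsilon_+t$. I expect the main source of difficulty to be the careful bookkeeping of the constants appearing in the diagonalization together with the signs introduced by the integrations by parts; otherwise the derivation is routine once one notes that the shifted variables are tailored precisely to kill the transport operator acting on the good unknowns $\sqrt{\beta}\rho\pm\eta$.
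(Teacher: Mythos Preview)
Your approach is correct and is essentially the same direct-computation strategy the paper uses: differentiate the functional, use the equations for $\rho_t$ and $\eta_t$, and integrate by parts. The only organizational difference is that you first diagonalize the hyperbolic subsystem to the transport equations for $\sqrt{\beta}\rho\pm\eta$ and then observe that the characteristic shift kills the transport part in one stroke, whereas the paper writes out all individual terms $\rho_t,\eta_t,\rho_x,\eta_x$ separately and lets the same cancellations emerge line by line; the two computations are equivalent and yield the same constants.
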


\begin{proof}
Similarly as in the previous lemma, we proceed by a direct 
computation. For the sake of simplicity we shall only write 
$\rho$, $\eta$, $\Phi_1$, ommiting their arguments. 
Indeed, taking the time derivative of the functional, using 
system \eqref{zr_br_eq} and performing some integration by 
parts we obtain
\begin{align*}
\dfrac{d}{dt}\mathcal{J}_1(t)&=\dfrac{\sqrt{\beta}\theta}{\mu}\int \rho_t\Phi_1\Phi_2'+ \dfrac{\beta-\alpha\sqrt{\beta}}{\mu}\int\rho_x\Phi_1\Phi_2'-\dfrac{\theta\sqrt{\beta}\mu'}{\mu^2}\int \rho\Phi_1\Phi_2'+\dfrac{\theta}{\mu}\int\eta_t\Phi_1\Phi_2'
\\ & \quad +\dfrac{\sqrt{\beta}-\alpha}{\mu}\int\eta_x\Phi_1\Phi_2'-\dfrac{\theta\mu'}{\mu^2}\int\eta\Phi_1\Phi_2'-\dfrac{\sqrt{\beta}\theta\lambda_1'}{\mu\lambda_1}\int \left(\dfrac{x}{\lambda_1}\right)\rho\Phi_1'\Phi_2'
\\ & \quad -\dfrac{\theta\lambda_1'}{\mu\lambda_1}\int \left(\dfrac{x}{\lambda_1}\right)\eta\Phi_1'\Phi_2'-\dfrac{\sqrt{\beta}\theta\lambda_2'}{\mu\lambda_2}\int \left(\dfrac{x}{\lambda_2}\right)\rho\Phi_1\Phi_2''-\dfrac{\theta\lambda_2'}{\mu\lambda_2}\int \left(\dfrac{x}{\lambda_2}\right)\eta\Phi_1\Phi_2''
\\ & = \dfrac{\alpha\sqrt{\beta}}{\mu}\int \rho_x\Phi_1\Phi_2'-\dfrac{\sqrt{\beta}}{\mu}\int\eta_x\Phi_1\Phi_2' -\dfrac{\gamma \sqrt{\beta}}{\mu}\int (\vert\psi\vert^2)_x\Phi_1\Phi_2'+\dfrac{\alpha}{\mu} \int\eta_x\Phi_1\Phi_2'
\\ & \quad +\dfrac{\beta-\sqrt{\beta}\alpha}{\mu}\int\rho_x\Phi_1\Phi_2'-\dfrac{\sqrt{\beta}\theta\mu'}{\mu^2}\int \rho\Phi_1\Phi_2'-\dfrac{\beta}{\mu} \int\rho_x \Phi_1\Phi_2'-\dfrac{\theta\mu'}{\mu^2}\int\eta\Phi_1\Phi_2'
\\ & \quad +\dfrac{\alpha\gamma}{2\mu}\int(\vert\psi\vert^2)_x \Phi_1\Phi_2'+\dfrac{\sqrt{\beta}-\alpha}{\mu}\int\eta_x\Phi_1\Phi_2'-\dfrac{\sqrt\beta\theta\lambda_1'}{\mu\lambda_1}\int \left(\dfrac{x}{\lambda_1}\right)\rho\Phi_1'\Phi_2'
\\ & \quad -\dfrac{\theta\lambda_1'}{\mu\lambda_1}\int \left(\dfrac{x}{\lambda_1}\right)\eta\Phi_1'\Phi_2'-\dfrac{\sqrt{\beta}\theta\lambda_2'}{\mu\lambda_2}\int \left(\dfrac{x}{\lambda_2}\right)\rho\Phi_1\Phi_2''-\dfrac{\theta\lambda_2'}{\mu\lambda_2}\int \left(\dfrac{x}{\lambda_2}\right)\eta\Phi_1\Phi_2''
\\ & = \dfrac{\gamma \sqrt{\beta}}{\mu\lambda_1}\int \vert\psi\vert^2\Phi_1'\Phi_2'+\dfrac{\gamma \sqrt{\beta}}{\mu\lambda_2}\int \vert\psi\vert^2\Phi_1\Phi_2''-\dfrac{\sqrt{\beta}\theta\mu'}{\mu^2}\int \rho\Phi_1\Phi_2'-\dfrac{\theta\mu'}{\mu^2}\int\eta\Phi_1\Phi_2'
\\ & \quad -\dfrac{\alpha\gamma}{2\mu\lambda_1}\int\vert\psi\vert^2\Phi_1'\Phi_2'-\dfrac{\alpha\gamma}{2\mu\lambda_2}\int\vert\psi\vert^2\Phi_1\Phi_2''-\dfrac{\sqrt\beta\theta\lambda_1'}{\mu\lambda_1}\int \left(\dfrac{x}{\lambda_1}\right)\rho\Phi_1'\Phi_2'
\\ & \quad -\dfrac{\theta\lambda_1'}{\mu\lambda_1}\int \left(\dfrac{x}{\lambda_1}\right)\eta\Phi_1'\Phi_2'-\dfrac{\sqrt{\beta}\theta\lambda_2'}{\mu\lambda_2}\int \left(\dfrac{x}{\lambda_2}\right)\rho\Phi_1\Phi_2''-\dfrac{\theta\lambda_2'}{\mu\lambda_2}\int \left(\dfrac{x}{\lambda_2}\right)\eta\Phi_1\Phi_2''.
\end{align*}
Thus, by gathering terms we conclude the proof of the lemma. The proof of the second formula follows the same arguments. 
Hence, we omit it. 
\end{proof}

\begin{rem}
We emphasize that non of these Virial Lemmas require the explicit definition of $\Phi$ nor the one for the scaling functions $\lambda_i$ and $\mu$ that we gave at the beginning of this section. In fact, in Section \ref{farfield_sec} we shall exploit \eqref{di_dt} for completely different definitions of $\Phi$, $\lambda$ and $\mu$ (as soon as all quantities are well-defined). However, unless stated otherwise, throughout the proof of Theorem \ref{MT1} we shall always assume that we are referring to the functions defined at the beginning of this section.
\end{rem}

\subsection{Uniform boundedness of the energy norm}

The following lemma is a direct consequence of the conservation laws and give us the time-uniform boundedness of the $H^1\times L^2\times L^2$-norm.
\begin{lem}\label{uniform_bound_energy}
	Let $(\psi,\rho,\eta)\in C(\R,H^1\times L^2\times L^2)$ be a global solution emanating from an initial data $(\psi_0,\rho_0,\eta_0)\in H^1\times L^2\times L^2$. Then, there exists a constant $C\in\R_+$, depending only on the norm of the initial data, such that the following global bound holds
	\[
	\Vert \psi(t)\Vert_{H^1}^2+\Vert \rho(t)\Vert_{L^2}^2+\Vert \eta(t)\Vert_{L^2}^2\leq C, \qquad \forall t\in\R.
	\]
\end{lem}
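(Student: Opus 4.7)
The plan is to extract the bound directly from the three conservation laws (mass, momentum and energy) stated before the lemma, combined with the structural assumption $\beta-\alpha^2>0$ and a one-dimensional Gagliardo--Nirenberg inequality. The momentum is in fact not needed here; the mass and the energy already control every quantity on the left-hand side.

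First, I would invoke mass conservation to fix once and for all that $\|\psi(t)\|_{L^2}=\|\psi_0\|_{L^2}$. Next, I would rewrite the conserved energy by isolating the quadratic form $Q(\rho,\eta):=\tfrac{\beta}{2}\rho^2+\tfrac{1}{2}\eta^2-\alpha\rho\eta$. Its discriminant is $\alpha^2-\beta<0$ by the standing assumption $\beta-\alpha^2>0$, so $Q$ is positive definite and there exists $\kappa_0=\kappa_0(\alpha,\beta)>0$ with $Q(\rho,\eta)\geq\kappa_0(\rho^2+\eta^2)$ pointwise. Consequently
\[
E(\psi,\rho,\eta)\geq \omega\|\psi_x\|_{L^2}^2+\kappa_0\bigl(\|\rho\|_{L^2}^2+\|\eta\|_{L^2}^2\bigr)+\tfrac{\gamma q}{2}\|\psi\|_{L^4}^4+\tfrac{\gamma}{2}\int(2\eta-\alpha\rho)|\psi|^2\,dx.
\]
The last two terms have no sign, and the whole issue is to absorb them into the three positive terms up to a constant depending only on $\|(\psi_0,\rho_0,\eta_0)\|_{H^1\times L^2\times L^2}$.

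For the cross term I would use Cauchy--Schwarz followed by Young's inequality: for every $\varepsilon>0$,
\[
\Bigl|\tfrac{\gamma}{2}\int(2\eta-\alpha\rho)|\psi|^2\Bigr|
\leq \varepsilon\bigl(\|\rho\|_{L^2}^2+\|\eta\|_{L^2}^2\bigr)+C_\varepsilon \|\psi\|_{L^4}^4.
\]
Choosing $\varepsilon=\kappa_0/2$ reduces everything to controlling $\|\psi\|_{L^4}^4$. Here I would apply the one-dimensional Gagliardo--Nirenberg inequality
\[
\|\psi\|_{L^4}^4\leq C\|\psi\|_{L^2}^{3}\|\psi_x\|_{L^2}\leq C(\psi_0)\|\psi_x\|_{L^2},
\]
where the second step uses mass conservation. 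A further Young inequality $C(\psi_0)\|\psi_x\|_{L^2}\leq \tfrac{\omega}{2}\|\psi_x\|_{L^2}^2+C'(\psi_0)$ then absorbs this contribution into $\omega\|\psi_x\|_{L^2}^2$.

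Putting everything together yields
\[
E(\psi_0,\rho_0,\eta_0)=E(\psi(t),\rho(t),\eta(t))\geq \tfrac{\omega}{2}\|\psi_x(t)\|_{L^2}^2+\tfrac{\kappa_0}{2}\bigl(\|\rho(t)\|_{L^2}^2+\|\eta(t)\|_{L^2}^2\bigr)-C'',
\]
with $C''=C''(\psi_0,\rho_0,\eta_0)$, from which the uniform bound follows after re-adding $\|\psi(t)\|_{L^2}^2=\|\psi_0\|_{L^2}^2$. The only subtle point, and what I would flag as the main technical obstacle, is the subcritical quartic term $\tfrac{\gamma q}{2}\|\psi\|_{L^4}^4$: because the sign of $q$ is not prescribed by the running assumptions, one must really rely on the fact that the Gagliardo--Nirenberg exponent of $\|\psi_x\|_{L^2}$ coming from this term is $1<2$, so it can be absorbed by the kinetic part without any smallness hypothesis on the data. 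This is precisely the mechanism that makes the energy space subcritical in one space dimension and underlies the global well-posedness result of Linares--Matheus \cite{LiMa} recalled in the introduction.
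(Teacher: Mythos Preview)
Your proof is correct and follows essentially the same route as the paper: both arguments exploit mass conservation together with the positive definiteness of the quadratic form $\tfrac{\beta}{2}\rho^2+\tfrac{1}{2}\eta^2-\alpha\rho\eta$ (guaranteed by $\beta-\alpha^2>0$), absorb the cross term $\tfrac{\gamma}{2}\int(2\eta-\alpha\rho)|\psi|^2$ via Young's inequality into the $\rho^2$, $\eta^2$ and $|\psi|^4$ pieces, and then control $\|\psi\|_{L^4}^4$ with the one-dimensional Gagliardo--Nirenberg inequality and mass conservation so that it can be absorbed by the kinetic term $\omega\|\psi_x\|_{L^2}^2$. The only cosmetic difference is that the paper writes out explicit Young constants whereas you invoke the positive definiteness of $Q$ abstractly via its discriminant; your remark that the sign of $q$ is irrelevant because the $L^4$ term is energy-subcritical is exactly the point the paper is using implicitly.
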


\begin{proof}
	The idea of the proof is to use the conservation of the energy, re-constructing such conserved quantity from the energy norm.
	In fact, first of all let us recall that from the conservation of the energy we have \begin{align}\label{re_energy}
	\int_\R\Big(\omega\vert\psi_x\vert^2%
	+\tfrac{\beta}{2}\rho^2+\tfrac{1}{2}\eta^2+\tfrac{\gamma q}{2}\vert\psi\vert^4+\tfrac{\gamma}{2}(2\eta-\alpha\rho) \vert\psi\vert^2-\alpha\rho\eta\Big)dx=E(0).
	\end{align}
	Hence, essentially we have to show that we can control the last three addends with the first three of them.
	Specifically, taking advantage of the conservation of both mass and energy, we would like to find appropriate constants $c>0$, $a, b \in \R$
	such that
	\[
	\Vert \psi(t)\Vert_{H^1}^2+
	\Vert \rho(t)\Vert_{L^2}^2+ \Vert \eta(t)\Vert_{L^2}^2
	\le c \left( E(0)^a+ M(0)^b\right).
	\]

	\par
	First, we notice that to control the crossed term $\rho\eta$, it is enough to use Young inequality for products, from where we get
	\begin{align}\label{bound_crossed}
	\alpha\int_\R \rho(t,x)\eta(t,x)dx\leq \dfrac{\beta+\alpha^2}{4}\int_\R\rho^2(t,x)dx +\dfrac{\alpha^2}{2(\beta+\alpha^2)}\int_R\eta^2(t,x)dx.
	\end{align}
	Then, gathering the corresponding quadratic terms with respect to $(\rho,\eta)$ appearing in the energy, we have
	\[
	\int_\R\Big(\tfrac{\beta}{2}\rho^2(t,x)+\tfrac{1}{2}\eta^2(t,x)-\alpha\rho(t,x)\eta(t,x)\Big)dx\geq \dfrac{\beta-\alpha^2}{4}\Vert \rho(t)\Vert_{L^2}^2+\dfrac{\beta}{2(\beta+\alpha^2)}\Vert\eta(t)\Vert_{L^2}^2.
	\]
	%
	%\[
	%\begin{aligned}
	%&2\omega\Vert \psi(t)\Vert_{H^1}^2+
	% \beta \Vert \rho(t)\Vert_{L^2}^2+ \Vert \eta(t)\Vert_{L^2}^2
	%\\
	%&\qquad =2\mathcal E(0)-{\gamma q}\|u(t)\|_{L^4}^4
	%+2\left(
	%\omega - 2\eta +\alpha \rho
	%\right)
	%\|\psi(t)\|_{L^2}^2
	%+2\alpha \int_\R \rho \eta dx
	%\\
	%&\qquad \le 2\mathcal E(0) +2\left|\omega -2\eta +\alpha \rho
	%\right|
	%\mathcal M(0)
	%-{\gamma q}\|u(t)\|_{L^4}^4
	%+2\alpha \int_\R \rho \eta dx.
	%\end{aligned}
	%\]

	\par
	We continue by bounding the contribution of the 
	$L^4$-norm of $\psi(t)$. Indeed, by using 
	Gagliardo-Nirenberg interpolation inequality, 
	as well as Young inequality in the resulting 
	right-hand side, we obtain
	\[
	\int_\R \vert\psi(t,x)\vert^4 dx
	\leq   \Vert \psi(t)\Vert_{H^1}\Vert 
	\psi(t)\Vert_{L^2}^3
	\leq
	\varepsilon\Vert \psi_x(t)\Vert_{L^2}^2
	+\varepsilon{M}(0)+\dfrac{1}{\varepsilon}{M}(0)^3.
	\]
	Once again, due to the conservation of mass, it is 
	enough to choose $\varepsilon\in(0,1)$ sufficiently 
	small so that we can absorb $\varepsilon\Vert 
	\psi_x(t)\Vert_{L^2}^2$ by using the first term in 
	\eqref{re_energy}. Finally, it only remains to bound 
	\[
	\dfrac{\gamma}{2}\int_\R\big(2\eta(t,x)
	-\alpha\rho(t,x)\big)\vert\psi(t,x)\vert^2dx.
	\]
	However, notice that this term can be controlled by 
	the previous ones. In fact, we have 
	\begin{align*}
	\dfrac{\gamma}{2}\int_\R\big(2\eta(t,x)
	-\alpha\rho(t,x)\big)\vert\psi(t,x)\vert^2dx & 
	\leq \dfrac{\beta}{16(\beta+\alpha^2)}\Vert 
	\eta(t)\Vert_{L^2}^2
	+\dfrac{\beta-\alpha^2}{16}\Vert\rho(t)
	\Vert_{L^2}^2
	\\ & \qquad + 
	\left(\dfrac{\gamma^2(\beta+\alpha^2)}{8\beta}
	+\dfrac{2\alpha^2\gamma^2}{\beta-\alpha^2}
	\right)\Vert\psi(t)\Vert_{L^4}^4.
	\end{align*}
	Therefore, gathering all the above estimates we 
	conclude the proof of the lemma.
\end{proof}
As a consequence of the previous lemma, we conclude the 
uniform boundedness of all the modified functionals.
\begin{cor}\label{uniform_bound_mod_functionals}
Let $(\psi, \rho, \eta )\in C(\R, H^1\times L^2\times L^2)$ be any solution to system \eqref{zr_br_eq} emanating from an initial data $(\psi_0,\rho_0,\eta_0)\in H^1\times L^2\times L^2$. Consider 
$\lambda_1(t)$, $\lambda_2$ and $\mu(t)$ defined as in 
\eqref{def_scalings_time}. Then, the following bound holds
\begin{align*}
\sup_{t\in(0,+\infty)}\Big(\big\vert\mathcal{J}_1(t)
\big\vert+\big\vert\mathcal{J}_2(t)\big\vert
+\big\vert\mathcal{I}_1(t)\big\vert
+\big\vert\mathcal{I}_2(t)\big\vert\Big)<+\infty.
\end{align*}
\end{cor}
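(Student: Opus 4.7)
The plan is to reduce each of the four modified functionals to the product of (i) a time-dependent prefactor built from $\mu$ and $\lambda_i$, and (ii) a spatial integral that can be controlled by the $H^1\times L^2\times L^2$-bound on $(\psi,\rho,\eta)$ provided by Lemma \ref{uniform_bound_energy}. Since $\mu(t)\ge\mu(0)=\kappa^{1/3}\log(\kappa)\log\log^{5/3}(\kappa)>0$ on $[0,+\infty)$, the inverse $1/\mu(t)$ is already uniformly bounded; the only remaining issue will be the potential growth of the spatial integrals in $t$, which comes from the scaling $\lambda_i(t)\to+\infty$.

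For the momentum-type pieces $\mathcal{I}_1(t)$ and $\mathcal{I}_2(t)$, the weight is only $\Phi$, which is bounded by $1$ uniformly in $x$ and $t$. Hence Cauchy--Schwarz immediately gives
\[
\vert\mathcal{I}_1(t)\vert\le \tfrac{1}{\mu(t)}\Vert\psi(t)\Vert_{L^2}\Vert\psi_x(t)\Vert_{L^2},\qquad \vert\mathcal{I}_2(t)\vert\le \tfrac{\theta}{\mu(t)}\Vert\rho(t)\Vert_{L^2}\Vert\eta(t)\Vert_{L^2},
\]
and both right-hand sides are uniformly bounded by Lemma \ref{uniform_bound_energy} together with $1/\mu(t)\le 1/\mu(0)$.

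For the mean-type pieces $\mathcal{J}_1(t)$ and $\mathcal{J}_2(t)$, the key observation is that the translation $x\mapsto x-\upsilon_\pm t$ is an isometry of $L^2(\R)$, so it plays no role in the estimate. Using $\vert\Phi\vert\le 1$ and Cauchy--Schwarz, both functionals are controlled by
\[
\tfrac{C}{\mu(t)}\bigl(\Vert\rho(t)\Vert_{L^2}+\Vert\eta(t)\Vert_{L^2}\bigr)\bigl\Vert\Phi'(\cdot/\lambda_2(t))\bigr\Vert_{L^2(\R)}.
\]
A change of variable yields $\Vert\Phi'(\cdot/\lambda_2)\Vert_{L^2}^2=\lambda_2\int\sech^4(y)\,dy$, so $\Vert\Phi'(\cdot/\lambda_2(t))\Vert_{L^2}\lesssim\sqrt{\lambda_2(t)}$. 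Hence
\[
\vert\mathcal{J}_1(t)\vert+\vert\mathcal{J}_2(t)\vert\lesssim \frac{\sqrt{\lambda_2(t)}}{\mu(t)}.
\]
Inserting the explicit definitions from \eqref{def_scalings_time} gives $\sqrt{\lambda_2(t)}/\mu(t)\simeq \log(\kappa+t)^{-1}\log\log(\kappa+t)^{-3/2}$, which is bounded (and in fact tends to $0$) on $[0,+\infty)$.

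The step most likely to need care is the last one: one must verify, just from the explicit form of $\lambda_2$ and $\mu$, that the factor $\sqrt{\lambda_2(t)}/\mu(t)$ stays bounded on $[0,+\infty)$. Everything else reduces to Cauchy--Schwarz plus the uniform energy bound, and the translations by $\upsilon_\pm t$ disappear after an isometric change of variables in $x$.
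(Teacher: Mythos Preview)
Your proof is correct and follows essentially the same approach as the paper: H\"older/Cauchy--Schwarz combined with the uniform energy bound of Lemma~\ref{uniform_bound_energy} for $\mathcal{I}_1,\mathcal{I}_2$, and for $\mathcal{J}_1,\mathcal{J}_2$ the additional observation that $\Vert\Phi'(\cdot/\lambda_2)\Vert_{L^2}\lesssim\lambda_2^{1/2}$ together with the boundedness of $\lambda_2^{1/2}/\mu$. Your version is slightly more detailed (making the translation isometry and the explicit asymptotics of $\lambda_2^{1/2}/\mu$ explicit), but the argument is the same.
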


\begin{proof}
First of all, notice that the time-uniform boundedness 
of $\mathcal{I}_1$ and $\mathcal{I}_2$ follows directly from 
H\"older inequality as well as the previous Lemma. In the 
same fashion, to bound $\mathcal{J}_1$ we proceed by H\"older 
inequality. However, since $\mathcal{J}_1$ is of order $1$ in 
$(\rho,\eta)$, in this case we obtain 
\begin{align}\label{mal_bound}
\vert\mathcal{J}_1(t)\vert\lesssim 
\dfrac{1}{\mu(t)}\Vert\rho(t)+\eta(t)\Vert_{L^2}\Vert 
\Phi\Vert_{L^\infty}\left\Vert 
\Phi'\big(\tfrac{\cdot}{\lambda_2(t)}\big)\right\Vert_{L^2} 
\lesssim \dfrac{\lambda_2^{1/2}(t)}{\mu(t)}<C,
\end{align}
where $C>0$ only depends on the initial data 
$(\psi_0,\rho_0,\eta_0)$. 

\par To conclude, we notice that the same procedure also 
provides a time-uniform bound for $\mathcal{J}_2(t)$. The 
proof is complete.
\end{proof}
\begin{rem}
Inequality \eqref{mal_bound} is precisely the condition 
that does not allow us to choose $\lambda_1(t)$ growing any 
faster. In particular, this is the reason why we cannot 
choose $\lambda_1(t)=t^{1^-}$,  for example.
\end{rem}

\section{Proof of Theorem \ref{MT1}}

\subsection{Time integrability of $\vert\psi\vert^2$} 
In this section we seek to use the previously found virial 
identities to prove the time integrability of the solution. 
In order to do that, we split the analysis in several steps. 
First, we shall show the time integrability 
(in the region given in Theorem \ref{MT1}) only for 
$\vert\psi(t,x)\vert^2$, which is proved in the following 
proposition. 
\begin{prop}\label{integrability_l2_prop}
Let $(\psi, \rho, \eta )\in C(\R, H^1\times L^2\times L^2)$ be any solution to system \eqref{zr_br_eq} emanating from an initial data $(\psi_0,\rho_0,\eta_0)\in H^1\times L^2\times L^2$. Then, for $\lambda_1(t)$, $\lambda_2(t)$, $\mu(t)$ and $\upsilon_\pm$ defined as in \eqref{def_scalings_time}-\eqref{def_upsilon_pm}, the following inequality holds
\begin{align}\label{time_integrability_l2}
\int_{0}^{+\infty}\frac{1}{\mu(t)\lambda_1(t)}\int_{\R}\big\vert\psi(t, x-\upsilon_\pm t)\big\vert^2 \sech^4\left(\frac x {\lambda_1(t)}\right)
dxdt&<+\infty.
\end{align}
\end{prop}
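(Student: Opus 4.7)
The plan is to integrate in time the virial identities from Lemma \ref{virial_j1}, treating the first right-hand side term as the ``good'' coercive term and the remaining five as errors. I handle the $\upsilon_-$ case with $\mathcal{J}_1$ and the $\upsilon_+$ case with $\mathcal{J}_2$ in exactly the same way. By Corollary \ref{uniform_bound_mod_functionals} both $\mathcal{J}_1$ and $\mathcal{J}_2$ are uniformly bounded on $\R_+$, so integrating $\tfrac{d}{dt}\mathcal{J}_i$ over $[0,T]$ leaves $|\mathcal{J}_i(T)-\mathcal{J}_i(0)|\leq C$ independently of $T$. Consequently, \eqref{time_integrability_l2} follows (after sending $T\to+\infty$) provided that the good term has a strictly positive sign with the desired weight and that every error term belongs to $L^1(\R_+)$.

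Regarding the good term: in $\tfrac{d}{dt}\mathcal{J}_1$ the coefficient $\gamma(2\sqrt\beta-\alpha)$ is strictly positive since $\gamma>0$ and the assumption $\beta-\alpha^2>0$ forces $\sqrt\beta>|\alpha|$; the analogous coefficient $\gamma(\sqrt\beta-\alpha)$ in $\tfrac{d}{dt}\mathcal{J}_2$ is positive for the same reason. To recover the $\sech^4(x/\lambda_1)$ weight claimed in the proposition, I use that $\lambda_2(t)\geq\lambda_1(t)$ for every $t\geq 0$ (an immediate consequence of \eqref{def_scalings_time} and $\log\log(\kappa)>1$), which by monotonicity of $\sech^2$ in $|x|$ gives
\[
\Phi'(x/\lambda_1)\,\Phi'(x/\lambda_2)=\sech^2(x/\lambda_1)\sech^2(x/\lambda_2)\geq \sech^4(x/\lambda_1)
\]
pointwise, yielding the desired lower bound once multiplied by $\gamma(2\sqrt\beta-\alpha)/(\mu\lambda_1)$ (respectively $\gamma(\sqrt\beta-\alpha)/(\mu\lambda_1)$).

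For the five error terms the strategy is uniform: pair $(\rho,\eta)$, or $|\psi|^2$, against the weights via H\"older/Cauchy-Schwarz, together with the uniform $H^1\times L^2\times L^2$-bound of Lemma \ref{uniform_bound_energy}. The $|\psi|^2\,\Phi(x/\lambda_1)\Phi''(x/\lambda_2)$ term admits a crude $L^\infty$ bound on the weight, leaving $C/(\mu\lambda_2)\in L^1(\R_+)$. For each of the four $(\rho,\eta)$-errors, Cauchy-Schwarz against $\Phi'(\cdot/\lambda_2)\in L^2(\R)$ (and, for the $\lambda_i'$-terms, the fact that $y\Phi'(y)$, $y\Phi''(y)\in L^\infty(\R)$) produces bounds of the form $\tfrac{\mu'\sqrt{\lambda_2}}{\mu^2}$, $\tfrac{\lambda_1'\sqrt{\lambda_2}}{\mu\lambda_1}$ and $\tfrac{\lambda_2'\sqrt{\lambda_2}}{\mu\lambda_2}$. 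A direct computation with \eqref{def_scalings_time} shows each of these behaves asymptotically like $(\kappa+t)^{-1}\log^{-1}(\kappa+t)\log\log^{-3/2}(\kappa+t)$, hence integrable. The main obstacle is precisely this integrability check: the factor $\sqrt{\lambda_2(t)}$ forced by Cauchy-Schwarz against $\Phi'(\cdot/\lambda_2)$ consumes almost all the decay built into $\mu(t)$, which is exactly why the slow $\log$ and $\log\log$ corrections in \eqref{def_scalings_time} are delicately tuned so that all five errors are only borderline integrable while $1/(\mu\lambda_1)\notin L^1(\R_+)$, as already reflected in \eqref{mal_bound}.
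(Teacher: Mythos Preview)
Your proof is correct and follows essentially the same route as the paper: isolate the positive leading term in Lemma~\ref{virial_j1}, invoke Corollary~\ref{uniform_bound_mod_functionals} for the uniform bound on $\mathcal{J}_i$, and estimate each of the remaining error terms in $L^1_t$ via Cauchy--Schwarz against the weight together with Lemma~\ref{uniform_bound_energy}. Two minor differences: (i) you make explicit the pointwise comparison $\sech^2(x/\lambda_1)\sech^2(x/\lambda_2)\geq\sech^4(x/\lambda_1)$ (valid since $\lambda_2\geq\lambda_1$), which the paper leaves implicit when passing from the $\Phi_1'\Phi_2'$ weight in the virial to the $\sech^4$ weight in the statement; (ii) for the $\lambda_1'$--error you pair against $\Phi'(\cdot/\lambda_2)$ and obtain $\lambda_1'\sqrt{\lambda_2}/(\mu\lambda_1)$, whereas the paper pairs against the $\lambda_1$--weight and gets the slightly sharper $\lambda_1'\sqrt{\lambda_1}/(\mu\lambda_1)$; both bounds are integrable, so this is immaterial.
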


\begin{proof}
Let us first consider the case 
of $\upsilon_-$. The case for $\upsilon_+$ 
follows from the same bounds up to trivial modifications. 
Indeed, we define
\begin{align*}
\mathcal{F}_-(t)&:= \dfrac{d}{dt}\mathcal{J}_1(t)-\dfrac{\gamma (2\sqrt{\beta}-\alpha)}{\mu(t)\lambda_2(t)}\int \big\vert\psi\big (t,x-\upsilon_- t\big)\big\vert^2\Phi\left(\dfrac{x}{\lambda_1(t)}\right)\Phi''\left(\dfrac{x}{\lambda_2(t)}\right)
\\ & \quad \,+\dfrac{\theta\mu'(t)}{\mu^2(t)}\int \big(\sqrt{\beta}\rho-\eta\big)(t,x-\upsilon_- t)\Phi\left(\dfrac{x}{\lambda_1(t)}\right)\Phi'\left(\dfrac{x}{\lambda_2(t)}\right)
\\ & \quad \,+\dfrac{\theta\lambda_1'(t)}{\mu(t)\lambda_1(t)}\int \left(\dfrac{x}{\lambda_1(t)}\right)\big(\sqrt{\beta}\rho-\eta\big)(t,x-\upsilon_- t)\Phi'\left(\dfrac{x}{\lambda_1(t)}\right)\Phi'\left(\dfrac{x}{\lambda_2(t)}\right)
\\ & \quad \,+\dfrac{\theta\lambda_2'(t)}{\mu(t)\lambda_2(t)}\int \left(\dfrac{x}{\lambda_2(t)}\right)\big(\sqrt{\beta}\rho-\eta\big)(t,x-\upsilon_- t)\Phi\left(\dfrac{x}{\lambda_1(t)}\right)\Phi''\left(\dfrac{x}{\lambda_2(t)}\right)
\\ & \,=:\mathrm{I}+\mathrm{II}+\mathrm{III}+\mathrm{IV}+\mathrm{V}.
\end{align*}
Then, from Lemma \ref{virial_j1} we infer that \[
\dfrac{1}{\mu(t)\lambda_1(t)}\int\big\vert\psi(t,x-\upsilon_-t)\big\vert^2\Phi'\left(\dfrac{x}{\lambda_1(t)}\right)\Phi'\left(\dfrac{x}{\lambda_2(t)}\right)dx=\mathcal{F}_-(t).
\]
Hence, the problem is reduced to prove that we can integrate $\mathcal{F}_-(t)$ on $(0,+\infty)$. In fact, first of all notice that, from  Corollary \ref{uniform_bound_mod_functionals}  we infer  that \[
\left\vert \int_0^{+\infty}\mathrm{I}(t)dt\right\vert\lesssim\limsup_{t\to+\infty}\big\vert\mathcal{J}_1(t)-\mathcal{J}_1(0)\big\vert<+\infty.
\]
Moreover, from Lemma \ref{uniform_bound_energy} as well as 
the explicit definitions of $\mu(t)$ and $\lambda_2(t)$, it 
immediately follows that $\mathrm{II}\in L^1(\R_+)$. On the 
other hand, from Lemma \ref{uniform_bound_energy} along with
H\"older inequality, we can bound $\mathrm{III}(t)$ by 
\[
\big\vert\mathrm{III}(t)\big\vert \lesssim \dfrac{\mu'(t)\Vert \Phi'(\tfrac{\cdot}{\lambda_2(t)})\Vert_{L^2}}{\mu^2(t)}\lesssim\dfrac{\lambda_2^{1/2}\mu'(t)}{\mu^2(t)}\lesssim \dfrac{1}{(\kappa+t)\log (\kappa+t)\log\log^{3/2}(\kappa+t)}\in L^1(\R_+).
\]
In the same fashion, applying Lemma 
\ref{uniform_bound_energy} and H\"older inequality, 
we can bound 
$\vert \mathrm{IV}(t)\vert$ and 
$\vert\mathrm{V}(t)\vert$ pointwisely by integrable function 
as  \begin{align*}
\big\vert \mathrm{IV}(t)\big\vert&\lesssim \dfrac{\lambda_1^{1/2}(t)\lambda_1'(t)}{\mu(t)\lambda_1(t)}\lesssim \dfrac{1}{(\kappa+t)\log (\kappa+t)\log\log^{2}(\kappa+t)}\in L^1(\R_+),
\\ \big\vert\mathrm{V}(t)\big\vert &\lesssim \dfrac{\lambda_2^{1/2}(t)\lambda_2'(t)}{\mu(t)\lambda_2(t)}\lesssim \dfrac{1}{(\kappa+t)\log (\kappa+t)\log\log^{3/2}(\kappa+t)}\in L^1(\R_+).
\end{align*}
Therefore, gathering all the above inequalities, 
we conclude the proof of \eqref{time_integrability_l2} 
in the case of $\upsilon_-$. 
\par Notice that the same proof 
(up to trivial modifications) also works for $\upsilon_+$. 
The proof is complete.
\end{proof}

\begin{rem}
The proof of the previous proposition does not depend on the 
value of $\alpha\in\R$, and hence, this concludes the first 
inequality  in Theorem \ref{MT1}.
\end{rem}
\subsection{Time Integrability of the full solution}
In this section we seek to extend the analysis to the full solution, that is, to include the corresponding integral terms associated to $(\psi_x,\rho,\eta)$. From now on we split the analysis in two cases concerning the values of $\alpha\in\R$.

\subsubsection{Case $\alpha\neq 0$}
In order to take advantage of the previous analysis, we consider a different version of the modified momentum functional adapted to this region. More specifically, we define modified momentum functional adapted to the characteristics $x-\upsilon_\pm t$, that is,
\begin{align*}
\widetilde{\mathcal{I}}_\pm(t)&:=\dfrac{1}{\mu(t)}\mathrm{Im}\int_\R\psi(t,x-\upsilon_\pm t)\overline{\psi_x}(t,x-\upsilon_\pm t)\Phi\left(\dfrac{x}{\lambda_1(t)}\right)dx
\\ & \,\quad -\dfrac{\theta}{\mu(t)}\int_\R\rho(t,x-\upsilon_\pm t)\eta(t,x-\upsilon_\pm t)\Phi\left(\dfrac{x}{\lambda_1(t)}\right)dx.
\end{align*}
Notice that, as a direct consequence of Lemma \ref{virial_identity_lemma}, we have the following identity \begin{align}\label{dt_tilde_di}
\dfrac{d}{dt}\widetilde{\mathcal{I}}_\pm(t)&=\dfrac{2\omega}{\mu\lambda_1}\int \vert\psi_x\vert^2\Phi'-\dfrac{\omega}{2\mu\lambda_1^3}\int \vert\psi\vert^2\Phi'''+\dfrac{1}{2\mu\lambda_1 }\int \eta^2\Phi'+\dfrac{\beta}{2\mu\lambda_1 }\int \rho^2\Phi' \nonumber
	\\ & \quad  +\dfrac{3\gamma q}{4\mu\lambda_1}\int \vert\psi\vert^4\Phi'-\dfrac{\alpha}{\mu\lambda_1 }\int \rho\eta\Phi'+\dfrac{\gamma}{\mu\lambda_1 }\int \big(\eta-\tfrac{\alpha}{2}\rho\big)\vert\psi\vert^2\Phi'-\dfrac{\mu'}{\mu^2}\int\rho\eta\Phi\nonumber
	\\ &\quad +\dfrac{\mu'}{\mu^2}\mathrm{Im}\int\psi\overline{\psi_x}\Phi+\dfrac{\lambda_1'}{\mu\lambda_1}\mathrm{Im}\int\left(\dfrac{x}{\lambda_1}\right)\psi\overline{\psi}_x\Phi'-\dfrac{\lambda_1'}{\mu\lambda_1 }\int\left(\dfrac{x}{\lambda_1}\right)\rho\eta\Phi'
	\\ & \quad \pm\dfrac{\sqrt{\beta}\pm\alpha}{\mu\lambda_1}\mathrm{Im}\int \psi\overline{\psi}_x\Phi'\mp\dfrac{\sqrt{\beta}\pm\alpha}{\mu\lambda_1}\int \rho\eta\Phi' \nonumber,
\end{align}
where we have used the fact that, since $\Phi$ is real-valued, we have \begin{align*}
-\mathrm{Im}\int \psi\overline{\psi}_{xx}\Phi\left(\dfrac{x}{\lambda_1}\right)&=\dfrac{1}{\lambda_1}\mathrm{Im}\int \psi\overline{\psi}_x\Phi'\left(\dfrac{x}{\lambda_1}\right).
\end{align*}
With all of this at hand, we are ready to prove the time integrability of the full solution in weighted spaces along these characteristics. The following proposition concludes the proof the Theorem \ref{MT1} in the case $\alpha\neq 0$.
\begin{prop}
Let $(\psi, \rho, \eta )\in C(\R, H^1\times L^2\times L^2)$ be any solution to system \eqref{zr_br_eq} emanating from an initial data $(\psi_0,\rho_0,\eta_0)\in H^1\times L^2\times L^2$. For $\lambda_1(t)$, $\mu(t)$ and $\upsilon_\pm$ defined as in \eqref{def_scalings_time}-\eqref{def_upsilon_pm}, we have the following two cases:
\begin{enumerate}
\item If $\alpha>0$, then the following holds
\begin{align*}
\int_0^{+\infty}\frac{1}{\mu(t)\lambda_1(t)}\int_{\R}\left(\vert\psi_x\vert^2+\vert \psi\vert^2+\rho^2+\eta^2\right)(t, x-\upsilon_- t) \sech^4\left(\frac x {\lambda_1(t)}\right)
dxdt&<+\infty.
\end{align*}
\item If $\alpha<0$, then the following holds
\begin{align*}
\int_0^{+\infty}\frac{1}{\mu(t)\lambda_1(t)}\int_{\R}\left(\vert\psi_x\vert^2+\vert\psi\vert^2+\rho^2+\eta^2\right)(t, x-\upsilon_+ t) \sech^4\left(\frac x {\lambda_1(t)}\right)
dxdt&<+\infty.
\end{align*}
\end{enumerate}
\end{prop}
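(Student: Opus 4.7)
The strategy parallels that of Proposition~\ref{integrability_l2_prop}, but starts from the modified momentum functional $\widetilde{\mathcal{I}}_\pm$ in order to coerce $|\psi_x|^2$, $\rho^2$ and $\eta^2$ simultaneously with $|\psi|^2$. For $\alpha>0$ I would use $\widetilde{\mathcal{I}}_-$ and for $\alpha<0$ the twin $\widetilde{\mathcal{I}}_+$; the two cases are symmetric, so I describe only the first. A preliminary step, identical to Corollary~\ref{uniform_bound_mod_functionals}, gives the time-uniform bound on $\widetilde{\mathcal{I}}_\pm(t)$: the shift $x\mapsto x-\upsilon_\pm t$ preserves all $L^2$-type norms, and $|\mathrm{Im}(\psi\overline{\psi_x})|+|\rho\eta|$ is controlled by the energy bound of Lemma~\ref{uniform_bound_energy}.

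The crucial algebraic observation is that, after grouping together the $\rho\eta\,\Phi'/(\mu\lambda_1)$ terms on the right-hand side of \eqref{dt_tilde_di}, namely the original $-\alpha\rho\eta$ plus the $\upsilon_-$-shift contribution $+(\sqrt{\beta}-\alpha)\rho\eta$, the $(\rho,\eta)$-quadratic form reads
\[
\tfrac{\beta}{2}\rho^2+\tfrac{1}{2}\eta^2+(\sqrt{\beta}-2\alpha)\,\rho\eta,
\]
whose matrix determinant equals $\alpha(\sqrt{\beta}-\alpha)$. Under the standing hypotheses $\alpha>0$ and $\beta>\alpha^2$ this determinant is strictly positive, so the form is uniformly positive definite and bounds $\rho^2+\eta^2$ from below. (For the $\upsilon_+$ case the analogous determinant equals $-\alpha(\sqrt{\beta}+\alpha)$, which is positive precisely when $\alpha<0$, which is what forces the case split.) In parallel, completing the square in the $(\psi_x,\psi)$ contribution yields
\[
2\omega|\psi_x|^2-(\sqrt{\beta}-\alpha)\,\mathrm{Im}(\psi\overline{\psi_x})=2\omega\bigl|\psi_x+\tfrac{i(\sqrt{\beta}-\alpha)}{4\omega}\psi\bigr|^2-\tfrac{(\sqrt{\beta}-\alpha)^2}{8\omega}|\psi|^2,
\]
so the main $(\psi_x,\psi)$ contribution is a nonnegative density plus an $|\psi|^2$ remainder that is already integrable in space-time by Proposition~\ref{integrability_l2_prop}.

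All remaining terms can be absorbed. The cubic cross term $\gamma(\eta-\tfrac{\alpha}{2}\rho)|\psi|^2\Phi'/(\mu\lambda_1)$ is split via Young's inequality into an $\varepsilon(\rho^2+\eta^2)\Phi'/(\mu\lambda_1)$ piece, absorbed into the coercive quadratic form for $\varepsilon$ small, plus a multiple of $|\psi|^4\Phi'/(\mu\lambda_1)$; the quartic terms (including $\tfrac{3\gamma q}{4}|\psi|^4\Phi'/(\mu\lambda_1)$) are then dominated by $\|\psi\|_{L^\infty}^2\,|\psi|^2\lesssim|\psi|^2$ via the one-dimensional Sobolev embedding and Lemma~\ref{uniform_bound_energy}, hence integrable in space-time by Proposition~\ref{integrability_l2_prop}. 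The term $-\omega|\psi|^2\Phi'''/(2\mu\lambda_1^3)$ is handled similarly, using the pointwise bound $|\Phi'''|\lesssim\Phi'$. Finally, each error term carrying a prefactor $\mu'/\mu^2$ or $\lambda_1'/(\mu\lambda_1)$ is estimated by H\"older and the energy bound, and belongs to $L^1(\R_+)$ thanks to the rates in \eqref{def_scalings_time}, exactly as in the proof of Proposition~\ref{integrability_l2_prop}.

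Integrating \eqref{dt_tilde_di} over $[0,T]$, the time-uniform bound on $\widetilde{\mathcal{I}}_\pm$ controls the boundary contribution, and letting $T\to+\infty$ produces the integrability of $|\psi_x|^2+\rho^2+\eta^2$ along the characteristic (using the triangle inequality $|\psi_x|^2\le 2|\psi_x+\tfrac{i(\sqrt{\beta}-\alpha)}{4\omega}\psi|^2+C|\psi|^2$ together with Proposition~\ref{integrability_l2_prop}, and $\sech^4\le\sech^2=\Phi'$ pointwise so that the $\sech^4$ weight in the statement follows). Proposition~\ref{integrability_l2_prop} then supplies the $|\psi|^2$ component. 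The main obstacle is exactly the algebraic identification of the positive definite quadratic form in $(\rho,\eta)$: only for the correct sign of $\alpha$ does one gain control of $\rho^2+\eta^2$ rather than merely of a rank-one combination like $(\sqrt{\beta}\rho\pm\eta)^2$, and this is what dictates the dichotomy $\alpha\gtrless 0$ in the proposition.
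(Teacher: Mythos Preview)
Your proposal is correct and follows essentially the same route as the paper: both use the shifted momentum functional $\widetilde{\mathcal{I}}_\pm$ and its virial identity \eqref{dt_tilde_di}, absorb the $(\eta-\tfrac{\alpha}{2}\rho)|\psi|^2$ and $|\psi|^4$ terms via Young/Sobolev together with Proposition~\ref{integrability_l2_prop}, and discard the $\mu'/\mu^2$ and $\lambda_1'/(\mu\lambda_1)$ remainders by the energy bound. The only (cosmetic) differences are that you certify positivity of the $(\rho,\eta)$ quadratic form by computing its determinant $\alpha(\sqrt{\beta}-\alpha)$ (respectively $-\alpha(\sqrt{\beta}+\alpha)$), while the paper picks an explicit Young parameter $\varepsilon_i$, and you complete the square in $(\psi_x,\psi)$ where the paper again applies Young's inequality.
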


\begin{proof} We shall proceed in a similar fashion as in Proposition \ref{integrability_l2_prop}. However, notice that in this case we have several quadratic terms with no definite sign. The idea is to use Proposition \ref{integrability_l2_prop} to deal with the terms involving $\vert\psi\vert^2$, and to absorb the crossed terms of the form $\rho\eta$ with the ones with $\rho^2$ and $\eta^2$. In fact, first of all notice that, thanks to Lemma \ref{uniform_bound_energy} and the explicit definitions of $\mu$ and $\lambda_1$, it is not difficult to see that \[
\dfrac{1}{\mu\lambda_1^3}\int\vert\psi\vert^2\Phi'''\in L^1(\R_+),\,\quad \dfrac{\mu'}{\mu^2}\int \rho\eta\Phi \in L^1(\R_+),\,\quad \dfrac{\mu'}{\mu^2}\mathrm{Im}\int\psi\overline{\psi}_x\Phi\in L^1(\R_+).
\]
Moreover, from Lemma \ref{uniform_bound_energy} we also infer that we can integrate $\widetilde{\mathcal{I}}'(t)$ on $(0,+\infty)$. On the other hand, by H\"older inequality, the explicit definitions of $\mu$ and $\lambda_1$, as well as Lemma \ref{uniform_bound_energy}, we obtain
\begin{align*}
&\dfrac{\lambda_1'}{\mu\lambda_1}\left\vert\mathrm{Im}\int\left(\dfrac{x}{\lambda_1}\right)\psi\overline{\psi}_x\Phi'\right\vert\lesssim\dfrac{\lambda_1'\Vert \psi\Vert_{L^\infty_tH^1_x}^2\Vert (\cdot)\Phi'\Vert_{L^\infty}}{\mu\lambda_1}\in L^1(\R_+), \qquad \hbox{and},
\\ &\dfrac{\lambda_1'}{\mu\lambda_1}\left\vert\int\left(\dfrac{x}{\lambda_1}\right)\rho\eta\Phi'\right\vert\lesssim \dfrac{\lambda_1'\Vert \rho\Vert_{L^\infty_tL^2_x}\Vert \eta\Vert_{L^\infty_tL^2_x}\Vert(\cdot)\Phi'\Vert_{L^\infty}}{\mu\lambda_1}\in L^1(\R_+).
\end{align*}
Now, for the term involving $\vert\psi\vert^4$ in \eqref{dt_tilde_di}, we use Proposition \ref{integrability_l2_prop}, Lemma \ref{uniform_bound_energy} as well as Sobolev embedding, from where we get \begin{align}\label{bound_psi4}
\dfrac{1}{\mu\lambda_1}\int \vert\psi\vert^4\Phi'\lesssim  \dfrac{1}{\mu\lambda_1}\Vert \psi\Vert_{L^\infty_tH^1_x}^2\int \vert\psi\vert^2\Phi'\in L^1(\R_+).
\end{align}
Besides, from Young inequality for products, it is not difficult to see that
\[
\dfrac{2\omega}{\mu\lambda_1}\int\vert\psi_x\vert^2\Phi'\pm\dfrac{(\sqrt\beta\pm\alpha)}{\mu\lambda_1}\mathrm{Im}\int \psi\overline{\psi}_x\Phi'\geq \dfrac{\omega}{\mu\lambda_1}\int\vert\psi_x\vert^2\Phi'-\dfrac{(\sqrt{\beta}-\alpha)^2}{4\omega\mu\lambda_1}\int\vert\psi\vert^2\Phi'.
\]
Notice that the last term in the right-hand side above, belongs to $L^1(\R_+)$, thanks to Proposition \ref{integrability_l2_prop}. Also, from Young inequality for products again, we additionally infer that \begin{align}\label{varepsilon_star_12}
\dfrac{\gamma}{\mu\lambda_1}\int\big(\eta-\tfrac{\alpha}{2}\rho\big)\vert\psi\vert^2\Phi'\geq -\dfrac{\varepsilon^*_1}{\mu\lambda_1}\int\eta^2\Phi'-\dfrac{\varepsilon^*_2}{\mu\lambda_1}\int\rho^2\Phi'-\dfrac{K}{\mu\lambda_1}\int \vert\psi\vert^4\Phi'
\end{align}
where $\varepsilon^*_1,\varepsilon^*_2>0$ denote sufficiently small numbers that shall be fixed later. Here $K=K(\varepsilon_1^*,\varepsilon_2^*)>0$ is a large number, however, proceeding in the same fashion as in \eqref{bound_psi4} we infer that this term belongs to $L^1(\R_+)$ no matter what the value of $K$ is.

\medskip

Finally, it only remains to control the quadratic terms in $\rho$ and $\eta$. Unfortunately, due to the factor $\mp(\sqrt\beta\pm 2\alpha)\rho\eta$ appearing in \eqref{dt_tilde_di}, we cannot obtain the required positivity in both directions $\upsilon_\pm$ for the terms involving $\rho^2$, $\eta^2$ and $\rho\eta$. Thus, we split the analysis in two cases regarding the sign of $\alpha$.

\medskip

\textbf{Case $\alpha>0$:}  We aim to prove the following claim that provide us the required positivity: There exists two constants $c_1,c_2>0$, such that \begin{align}\label{claim1}
\dfrac{1}{2\mu\lambda}\int\eta^2\Phi'+\dfrac{\beta}{2\mu\lambda}\int\rho^2\Phi'+\dfrac{\sqrt{\beta}-2\alpha}{\mu\lambda}\int \rho\eta\Phi'\geq \dfrac{c_1}{\mu\lambda}\int \eta^2\Phi'+\dfrac{c_2}{\mu\lambda}\int\rho^2\Phi'.
\end{align}
In this case we shall take advantage of $\widetilde{\mathcal{I}}_-(t)$. In fact, first of all, notice that, if $\sqrt{\beta}-2\alpha=0$, then there is nothing to prove. Then, in the sequel we assume $\sqrt{\beta}-2\alpha\neq 0$. Indeed, consider the parameter $\varepsilon_1\in\R$ given by \[
\varepsilon_1:=\dfrac{1}{2}\left(1+\dfrac{\beta}{(\sqrt\beta-2\alpha)^2}\right)>0.
\]
Then, by using Young inequality for products, with parameter given by $\varepsilon_1$, we infer \begin{align*}
&\dfrac{1}{2\mu\lambda}\int\eta^2\Phi'+\dfrac{\beta}{2\mu\lambda}\int\rho^2\Phi'+\dfrac{\sqrt{\beta}-2\alpha}{\mu\lambda}\int \rho\eta\Phi'
\\ & \qquad \qquad \geq \dfrac{1}{2\mu\lambda}\big(\beta-(\sqrt\beta-2\alpha)^2\varepsilon_1 \big)\int \eta^2\Phi'+\dfrac{1}{2\mu\lambda}\big(1-\varepsilon_1^{-1}\big)\int\rho^2\Phi'
\end{align*}
Moreover, notice that, since $\beta-\alpha^2>0$ and $\alpha>0$, we infer that $\varepsilon_1>1$, and hence $1-\varepsilon_1^{-1}>0$. On the other hand, by direct computations we see that \[
\beta-(\sqrt\beta-2\alpha)^2\varepsilon_1>\beta-\dfrac{(\sqrt\beta-2\alpha)^2\beta}{(\sqrt\beta-2\alpha)^2}=0.
\]
Then, plugging the previous computations into \eqref{claim1}, we conclude the proof of the claim.

\medskip

\textbf{Case $\alpha<0$:} In this case we aim to prove the following claim that provide us the required positivity: There exists two constants $c_1,c_2>0$, such that \begin{align}\label{claim2}
\dfrac{1}{2\mu\lambda}\int\eta^2\Phi'+\dfrac{\beta}{2\mu\lambda}\int\rho^2\Phi'-\dfrac{\sqrt{\beta}+2\alpha}{\mu\lambda}\int \rho\eta\Phi'\geq c_1\int \eta^2\Phi'+c_2\int\rho^2\Phi'.
\end{align}
In contrast with the previous case, in this case we shall take advantage of $\widetilde{\mathcal{I}}_+(t)$. In fact, first of all, notice that, if $\sqrt{\beta}+2\alpha=0$, then there is nothing to prove. Thus, in the sequel we assume $\sqrt{\beta}+2\alpha\neq 0$. Indeed, define $\varepsilon_2\in\R$ as \[
\varepsilon_2:=\dfrac{1}{2}\left(1+\dfrac{\beta}{(\sqrt\beta+2\alpha)^2}\right)>0.
\]
Then, by using Young inequality for products, with parameter given by $\varepsilon_2$, we infer \begin{align*}
&\dfrac{1}{2\mu\lambda}\int\eta^2\Phi'+\dfrac{\beta}{2\mu\lambda}\int\rho^2\Phi'-\dfrac{\sqrt{\beta}+2\alpha}{\mu\lambda}\int \rho\eta\Phi'
\\ & \qquad \qquad \geq \dfrac{1}{2\mu\lambda}\big(\beta-(\sqrt{\beta}+2\alpha)^2\varepsilon_2\big)\int \eta^2\Phi'+\dfrac{1}{2\mu\lambda}\big(1-\varepsilon_2^{-1}\big)\int\rho^2\Phi'.
\end{align*}
Moreover, by using both $\beta-\alpha^2>0$ and $\alpha<0$, proceeding in exactly the same fashion as in the previous claim, we conclude that both factors in front of each of the integral on the right-hand side of the latter inequality are strictly positive. Thus, we conclude the proof of the claim.

\medskip

Finally, it only remains to set the definition of $\varepsilon_1^*$ and $\varepsilon_2^*$ in \eqref{varepsilon_star_12}. Notice that we have to give a different definition depending on the case $\alpha \gtrless 0$. In fact, from the above analysis it follows that it is enough to consider \begin{align*}
\varepsilon_{1,+}^*&:=10^{-10}\big(\beta-(\sqrt\beta-2\alpha)^2\varepsilon_1 \big) \qquad \varepsilon_{2,+}^*:=10^{-10}(1-\varepsilon_1^{-1}),
\\ \varepsilon_{1,-}^*&:=10^{-10}\big(\beta-(\sqrt\beta+2\alpha)^2\varepsilon_2 \big) \qquad \varepsilon_{2,-}^*:=10^{-10}(1-\varepsilon_2^{-1}),
\end{align*}
where $\varepsilon_{i,\pm}^*$ stands for the case where $\alpha$ is positive or negative respectively. Hence, we conclude the proof of the proposition.
\end{proof}

\subsubsection{Case $\alpha=0$} In the case when $\alpha=0$, we can give a much simpler and shorter proof. In fact, in this case, from Lemma \ref{virial_identity_lemma} we can easily deduce the following result.

\begin{prop}
Let $(\psi, \rho, \eta )\in C(\R, H^1\times L^2\times L^2)$ be any solution to system \eqref{zr_br_eq} emanating from an initial data $(\psi_0,\rho_0,\eta_0)\in H^1\times L^2\times L^2$. For $\lambda_1(t)$, $\mu(t)$ and $\upsilon_\pm$ defined as in \eqref{def_scalings_time}-\eqref{def_upsilon_pm}, the following inequality holds \[
\int_0^{+\infty}\dfrac{1}{\mu(t)\lambda_1(t)}\int_\R\Big(\vert\psi(t,x)\vert^4+\vert\psi_x(t,x)\vert^2+\eta^2(t,x)+\rho^2(t,x)\Big)\sech^2\left(\dfrac{x}{\lambda_1(t)}\right)dxdt<+\infty.
\]
\end{prop}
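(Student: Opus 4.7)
The plan is to integrate the virial identity \eqref{di_dt} of Lemma \ref{virial_identity_lemma} directly in time, exploiting the dramatic simplification that occurs when $\alpha=0$: the sign-indefinite term $-\tfrac{\alpha}{\mu\lambda_1}\int\rho\eta\Phi'$ vanishes identically, we have $q=\gamma>0$, and the remaining mixed term reduces to $\tfrac{\gamma}{\mu\lambda_1}\int\eta|\psi|^2\Phi'$. As a consequence, the right-hand side of \eqref{di_dt} contains the four manifestly positive contributions
\[
\tfrac{2\omega}{\mu\lambda_1}\int|\psi_x|^2\Phi', \qquad \tfrac{1}{2\mu\lambda_1}\int\eta^2\Phi', \qquad \tfrac{\beta}{2\mu\lambda_1}\int\rho^2\Phi', \qquad \tfrac{3\gamma^2}{4\mu\lambda_1}\int|\psi|^4\Phi',
\]
which, since $\Phi'(x/\lambda_1)=\sech^2(x/\lambda_1)$, are precisely the integrands appearing in the claimed inequality.

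Integrating \eqref{di_dt} over $(0,T)$ yields $\mathcal{I}(0)-\mathcal{I}(T)$ on the left, which is uniformly bounded in $T$ thanks to Corollary \ref{uniform_bound_mod_functionals}. The remaining error terms on the right-hand side are handled by H\"older's inequality together with the uniform energy bound of Lemma \ref{uniform_bound_energy} and the integrability properties $\tfrac{\mu'}{\mu^2},\tfrac{\lambda_1'}{\mu\lambda_1}\in L^1(\R_+)$ recalled at the beginning of Section 2. Concretely, the $\Phi'''$ term is bounded pointwise by $\tfrac{C}{\mu\lambda_1^3}$, which is integrable since $\mu\lambda_1^3$ grows like $t^{7/3}$ up to logarithmic factors; the two terms carrying $\tfrac{\mu'}{\mu^2}$ are dominated by $C\tfrac{\mu'}{\mu^2}$ using $\|\Phi\|_{L^\infty}=1$; and the two terms carrying $\tfrac{\lambda_1'}{\mu\lambda_1}$ involve the bounded weight $y\Phi'(y)=y\sech^2(y)$.

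The only real obstacle is the sign-indefinite term $\tfrac{\gamma}{\mu\lambda_1}\int\eta|\psi|^2\Phi'$, which must be absorbed into the positive contributions listed above. By Young's inequality with parameter $\varepsilon>0$,
\[
\left|\tfrac{\gamma}{\mu\lambda_1}\int\eta|\psi|^2\Phi'\right|
\leq \tfrac{\gamma\varepsilon}{\mu\lambda_1}\int\eta^2\Phi'+\tfrac{\gamma}{4\varepsilon\mu\lambda_1}\int|\psi|^4\Phi'.
\]
The crucial observation is that the two absorption conditions $\gamma\varepsilon<\tfrac{1}{2}$ (for the $\eta^2$ piece) and $\tfrac{\gamma}{4\varepsilon}<\tfrac{3\gamma^2}{4}$ (for the $|\psi|^4$ piece) are \emph{simultaneously satisfiable}: any $\varepsilon$ in the nonempty interval $(\tfrac{1}{3\gamma},\tfrac{1}{2\gamma})$ works. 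Fixing such an $\varepsilon$, the coefficients in front of each of the four desired integrals stay uniformly bounded below by a strictly positive constant, and letting $T\to+\infty$ yields the stated time-integral bound. It is worth emphasising that this simultaneous absorption is possible precisely because $q=\gamma$ when $\alpha=0$; in the general case the more intricate analysis of the previous subsection, based on the translated functionals $\widetilde{\mathcal{I}}_\pm$, is required.
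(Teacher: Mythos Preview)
Your proof is correct and follows essentially the same route as the paper: use the standard (untranslated) functional $\mathcal{I}(t)$, observe that $\alpha=0$ forces $q=\gamma>0$ and kills the $\rho\eta$ term, absorb the only indefinite contribution $\tfrac{\gamma}{\mu\lambda_1}\int\eta|\psi|^2\Phi'$ into the $\eta^2$ and $|\psi|^4$ pieces via Young's inequality, and bound the remaining error terms exactly as you describe. Your explicit computation of the admissible range $\varepsilon\in(\tfrac{1}{3\gamma},\tfrac{1}{2\gamma})$ is more detailed than the paper, which simply records the resulting lower bound with coefficients $\tfrac{\gamma^2}{100}$ and $\tfrac{1}{100}$.
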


\begin{proof}
In fact, by using the standard modified momentum function 
$\mathcal{I}(t)$ (instead of $\widetilde{\mathcal{I}}(t)$ as before), we proceed in the same fashion as in the previous proposition, using Lemma 
\ref{virial_identity_lemma} and noticing that, under our 
current assumptions, $q=\gamma>0$, from where we infer that \[
\dfrac{3\gamma q}{ 4\mu\lambda_1}\int\vert\psi\vert^4\Phi'+\dfrac{1}{2\mu\lambda_1}\int\eta^2\Phi'+\dfrac{\gamma}{\mu\lambda_1}\int\eta\vert\psi\vert^2\Phi'> \dfrac{\gamma^2}{100\mu\lambda_1}\int \vert\psi\vert^4\Phi'+\dfrac{1}{100\mu\lambda_1}\int\eta^2\Phi'.
\]
Then, the proof follows by gathering the latter inequality with \eqref{di_dt} and recalling that $\lambda_1^{-3}\in L^1(\R_+)$. In order to avoid over-repeated computations we omit the details. 
\end{proof}

\medskip

\section{Decay in far field regions}\label{farfield_sec}

In this section we seek to prove pointwise decay in far field
regions by taking advantage of some suitable virial identities, as before. The analysis is similar (in spirit) to that shown in the previous section. However, in this case, the idea will be somewhat the opposite, in the sense that now the important terms shall come from the derivative of the weight $\Phi$, instead of the derivative of the solution, as in the previous section. To do so, we consider both the modified mass functional as well as the modified energy functional, which are given by (respectively)
\[
\mathcal M_\pm(t):=
\int_{\R}\Phi\left(\frac{\pm x+\zeta(t)}{\lambda(t)}\right) |\psi(t,x)|^2dx,
\]
\[
\mathcal E_\pm(t):=
\int_\R\Phi\left(\frac{\pm x+\zeta(t)}{\lambda(t)}
\right)\Big(\omega\vert\psi_x\vert^2%
+\tfrac{\gamma q}{2}\vert\psi\vert^4+\tfrac{\beta}{2}\rho^2
+\tfrac{1}{2}\eta^2+\tfrac{\gamma}{2}(2\eta-\alpha\rho) \vert\psi\vert^2-\alpha\rho\eta\Big)dx.
\]
Here, $\Phi$ stands for a smooth and bounded weight (not necessarily decaying at $\pm\infty$), which shall be completely different to the one chosen in the previous section (see \eqref{DefPhi1}-\eqref{DefPhi2} for the exact definition). Notice also that, in contrast with the modified mean functional, now we only require $\Phi$ belonging to $L^\infty(\R)$ in order for $\mathcal{M}_\pm$ and $\mathcal{E}_\pm$ to be well-defined and uniformly bounded (for solutions in the energy space). Additionally, in this case we define the scaling 
$\lambda(t)$ and the shift $\zeta(t)$ as 
\begin{align}\label{def_lambda_far_fields}
	\lambda(t):=(1+t)^{2+\delta}, 
	\quad {\zeta(t)\ge c_1 \lambda(t),
	\quad \zeta'(t)\ge c_2\lambda'(t),
	} \quad c_1, c_2>0, \ \delta>0.
\end{align}
In a similar spirit as in the previous section, the main motivation to consider these specific definitions of $\lambda$ and $\zeta$ is to obtain
\begin{align}\label{time_2_integrability_lambda_zeta}
	\dfrac{1}{\lambda},\dfrac{1}{\zeta} \in L^1(\R_+), \quad \hbox{however}\quad
	\frac{\lambda'}{\lambda},\frac{\zeta'}{\lambda} \not\in L^1(\R_+),
\end{align}
which shall allow us to neglect some bad terms (in some sense). We emphasize that, in this case, we are considering a scaling factor $\lambda(t)$  growing 
faster than linear (in contrast with the previous sections). 
This changes the behavior of some important terms (with 
respect to the above analysis) that we intend to take 
advantage of. 

\medskip

On the other hand, to simplify computations, we split the modified energy functional
$\mathcal E$ into the following functionals
\begin{align}
	&\mathcal E_{\pm,1}(t):=
	\int_\R\Phi\left(\frac{\pm x+\zeta(t)}{\lambda(t)}
	\right)\Big(\omega\vert\psi_x\vert^2%
	+\tfrac{\gamma q}{2}\vert\psi\vert^4\Big)dx,\label{FunctionalsFFR1}
	\\
	&
	\mathcal E_{\pm,2}(t):=
	\int_\R\Phi\left(\frac{\pm x+\zeta(t)}{\lambda(t)}
	\right)\Big(\tfrac{\beta}{2}\rho^2
	+\tfrac{1}{2}\eta^2
	+
	\tfrac{\gamma}{2}(2\eta-\alpha\rho) \vert\psi\vert^2
	-\alpha\rho\eta
	\Big)dx.\nonumber
	\end{align}

Before going further, let us compute the virial identities associated with our current functionals, that shall give us the fundamental information for the following analysis.
\begin{lem}
Let $(\psi,\rho,\eta)\in C(\R,H^1\times L^2\times L^2)$ be any solution to system \eqref{zr_br_eq}. Then, for all $t\in\R$, the following identity holds	
\begin{align}
	\frac{d}{dt}\mathcal M_\pm(t)
	=
	&
	-\frac{\lambda'}{\lambda}
	\int_{\R} \Phi'\left(\frac{\pm x+\zeta}{\lambda}\right)
	\left(\frac{\pm x+\zeta}{\lambda}\right)|\psi|^2dx
	+\frac{\zeta'}{\lambda}
	\int_{\R} \Phi'\left(\frac{\pm x+\zeta}{\lambda}\right)|\psi|^2dx\nonumber
\\ 	&\pm\frac{2}{\lambda}
	\mathrm{Im}\int_{\R} \Phi'\left(\frac{\pm x+\zeta}{\lambda}\right)
	\overline \psi \psi_x dx.\label{VirialFromMassFFR}
\end{align}
\end{lem}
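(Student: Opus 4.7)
The plan is to differentiate $\mathcal{M}_\pm(t)$ directly under the integral. As in the proof of Lemma~2.2, I will argue formally on smooth solutions and extend to $C(\R, H^1\times L^2\times L^2)$ by the standard density/approximation procedure.

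Writing $\Phi_\pm(t,x) := \Phi((\pm x + \zeta(t))/\lambda(t))$, the time derivative splits as
\[
\frac{d}{dt}\mathcal{M}_\pm(t) = \int_{\R} (\partial_t \Phi_\pm)\, |\psi|^2\, dx + \int_{\R} \Phi_\pm\, \partial_t |\psi|^2\, dx.
\]
The first integral follows immediately from the chain rule applied to the argument of $\Phi$: since
\[
\partial_t \Phi_\pm = \Phi'\!\left(\tfrac{\pm x+\zeta}{\lambda}\right)\!\left[\tfrac{\zeta'}{\lambda} - \tfrac{\lambda'(\pm x+\zeta)}{\lambda^2}\right]\!,
\]
this contribution produces exactly the first two terms on the right-hand side of \eqref{VirialFromMassFFR}.

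For the second integral, I use only the Schr\"odinger equation in \eqref{zr_br_eq}. Since $\rho$, $\eta$, and $|\psi|^2$ are real-valued, the potential $V := \gamma(\eta - \tfrac{\alpha}{2}\rho + q|\psi|^2)$ is real, and the familiar continuity identity holds:
\[
\partial_t |\psi|^2 = \bar\psi\psi_t + \psi\,\overline{\psi_t} = i\omega\bigl(\bar\psi\psi_{xx} - \psi\bar\psi_{xx}\bigr) = -2\omega\, \partial_x \mathrm{Im}(\bar\psi\psi_x).
\]
Integrating by parts against $\Phi_\pm$ (the boundary terms vanish since $\psi(t,\cdot) \in H^1(\R)$ and $\Phi \in L^\infty(\R)$), and using $\partial_x \Phi_\pm = \pm \lambda^{-1}\Phi'((\pm x+\zeta)/\lambda)$, one obtains the last term on the right-hand side of \eqref{VirialFromMassFFR} with the claimed sign $\pm$.

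Gathering the three pieces yields the identity. The computation is essentially elementary and no step presents a genuine obstacle; the only point requiring care is the justification of differentiation under the integral sign, which is handled by the same density argument already invoked in Lemmas~2.2 and~2.3. Note in particular that the $(\rho,\eta)$-equations play no role in this virial identity, consistent with the fact that the mass of the Schr\"odinger component is conserved independently of the hydrodynamic components and that $\Phi$ here need only be bounded rather than decaying.
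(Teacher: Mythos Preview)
Your proposal is correct and follows precisely the direct-computation approach that the paper itself invokes (the paper's proof reads in full: ``The proof follows from direct computations. We omit this proof.''). Carrying your integration-by-parts step through explicitly in fact yields a factor $\pm\tfrac{2\omega}{\lambda}$ rather than $\pm\tfrac{2}{\lambda}$ in the last term, so the stated identity appears to be missing an $\omega$; this is a typo in the paper and not a flaw in your argument.
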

\begin{proof}
The proof follows from direct computations. We omit this proof.
\end{proof}

\begin{lem}\label{VirialDecayAlongCurves}
Let $(\psi,\rho,\eta)\in C(\R,H^2\times H^1\times H^1)$ be any solution to system \eqref{zr_br_eq}. Then, for all $t\in\R$, the following identity holds,
\begin{align}	
	\frac{d}{dt} \mathcal E_\pm(t)
		&	
		=
		\frac{\zeta'}{\lambda}
		\int_\R
		\Phi'\left(\frac{\pm x+\zeta(t)}{\lambda(t)}\right)
		\Big(\omega\vert\psi_x\vert^2
		+\tfrac{\gamma q}{2}\vert\psi\vert^4\Big)dx\nonumber
		\\
		&
		\quad
		-\frac{\lambda'}{\lambda}
		\int_\R
		\Phi'\left(\frac{\pm x+\zeta(t)}{\lambda(t)}\right)
		\left(\frac{\pm x+\zeta(t)}{\lambda(t)}\right)
		\Big(\omega\vert\psi_x\vert^2
		+\tfrac{\gamma q}{2}\vert\psi\vert^4\Big)dx\nonumber
		\\
		&
		\quad 
		+\frac{\zeta'}{\lambda}
			\int_\R\Phi'\left(\frac{\pm x+\zeta(t)}{\lambda(t)}
			\right)\Big(\tfrac{\beta}{2}\rho^2
			+\tfrac{1}{2}\eta^2
			+
			\tfrac{\gamma}{2}(2\eta-\alpha\rho) \vert\psi\vert^2
			-\alpha\rho\eta
			\Big)dx	\label{VirialFromEnergyFFR}		
			\\
			&
			\quad 
			-
			 \frac{\lambda'}{\lambda}
			\int_\R
			\Phi'\left(\frac{\pm x+\zeta(t)}{\lambda(t)}
			\right)
			\left(\frac{\pm x+\zeta(t)}{\lambda(t)}
			\right)\Big(\tfrac{\beta}{2}\rho^2
			+\tfrac{1}{2}\eta^2
			+
			\tfrac{\gamma}{2}(2\eta-\alpha\rho) \vert\psi\vert^2
			-\alpha\rho\eta
			\Big)dx	\nonumber		
			\\
			&
			\quad
			\pm\frac{2\omega^2}{\lambda}
			\mathrm{Im}
			\int_\R
			\Phi'\left(\frac{\pm x+\zeta(t)}{\lambda(t)}\right)
			 \overline\psi_x \psi_{xx}
			dx
			\pm\frac{4 \gamma q \omega}{\lambda}
			\mathrm{Im}
			\int_\R\Phi'
			\left(\frac{\pm x+\zeta(t)}{\lambda(t)}\right)
			\vert\psi\vert^2 \overline \psi
			\psi_{x}
			dx\nonumber
			\\
			&
			\quad 
			\mp
			\frac \alpha {\theta\lambda}
			\int_\R 
			\Phi'\left(\frac{\pm x+ \zeta(t)}{\lambda(t)}\right)	
			\Big(
				\beta \vert \rho \vert ^2 + 
				\vert \eta \vert ^2
			\Big) 
			dx
			\pm 
			\frac{\beta+\alpha^2}{\theta\lambda}
			\int_\R 
			\Phi'\left(\frac{\pm x+ \zeta(t)}{\lambda(t)}\right)	
			 \rho\eta
			dx\nonumber
			\\
			&
			\quad 
			\pm
			\left(\frac{\gamma\beta}{\theta}
			+\frac{\gamma\alpha}{2\theta}\right)\frac1\lambda
			\int_\R 
			\Phi'\left(\frac{\pm x+ \zeta(t)}{\lambda(t)}\right)
			\rho\vert \psi \vert^2 dx
			\mp
			\frac32\frac{\gamma\alpha}{\theta\lambda}
			\int_\R 
			\Phi'\left(\frac{\pm x+ \zeta(t)}{\lambda(t)}\right)
			\eta\vert \psi \vert^2 
			dx\nonumber
			\\
			&
			\quad 
			\pm
			\frac{\gamma^2\alpha}{\theta\lambda}
			\int_\R 
			\Phi'\left(\frac{\pm x+ \zeta(t)}{\lambda(t)}\right)
			\vert \psi \vert^4
			dx
			\pm \frac{\gamma \omega}2 \frac 1 \lambda 
			\mathrm{Im} \int_\R 
			\Phi'\left(\frac{\pm x+ \zeta(t)}{\lambda(t)}\right)
			\left(
				2 \eta - \alpha \rho 
			\right)
			\overline \psi \psi_{x} dx. \nonumber
\end{align}
\end{lem}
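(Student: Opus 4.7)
Write $\mathcal{E}_\pm(t)=\int_\R \Phi_\pm(t,x)\,e(t,x)\,dx$, with $\Phi_\pm(t,x):=\Phi\bigl(\tfrac{\pm x+\zeta(t)}{\lambda(t)}\bigr)$ and $e$ denoting the energy density appearing under the integral sign in the definition of $\mathcal{E}_\pm$. Then
\[
\frac{d}{dt}\mathcal{E}_\pm(t)=\int_\R(\partial_t\Phi_\pm)\,e\,dx+\int_\R\Phi_\pm\,(\partial_t e)\,dx,
\]
and I would treat the two contributions separately. For the first one, the chain rule gives
\[
\partial_t\Phi_\pm=\frac{\zeta'}{\lambda}\,\Phi'\Bigl(\tfrac{\pm x+\zeta}{\lambda}\Bigr)-\frac{\lambda'}{\lambda}\Bigl(\tfrac{\pm x+\zeta}{\lambda}\Bigr)\Phi'\Bigl(\tfrac{\pm x+\zeta}{\lambda}\Bigr),
\]
and splitting the energy density into the Schr\"odinger part $\omega|\psi_x|^2+\tfrac{\gamma q}{2}|\psi|^4$ and the fluid/coupling part $\tfrac{\beta}{2}\rho^2+\tfrac{1}{2}\eta^2+\tfrac{\gamma}{2}(2\eta-\alpha\rho)|\psi|^2-\alpha\rho\eta$ produces exactly the first four lines of \eqref{VirialFromEnergyFFR}.

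For the second contribution, the guiding observation is that the total energy is conserved, hence $\partial_t e=-\partial_x j$ for some energy current $j$; an integration by parts then gives
\[
\int_\R\Phi_\pm(\partial_t e)\,dx=\pm\frac{1}{\lambda}\int_\R\Phi'\Bigl(\tfrac{\pm x+\zeta}{\lambda}\Bigr)\,j\,dx,
\]
which is consistent with the fact that every remaining term on the right-hand side of \eqref{VirialFromEnergyFFR} carries the prefactor $\pm 1/\lambda$ together with $\Phi'$. I would then compute $j$ piece by piece, substituting into $\partial_t e$ the equations $i\psi_t=-\omega\psi_{xx}+\gamma(\eta-\tfrac{\alpha}{2}\rho+q|\psi|^2)\psi$, $\theta\rho_t=-\partial_x(\eta-\alpha\rho)-\gamma\partial_x|\psi|^2$ and $\theta\eta_t=-\partial_x(\beta\rho-\alpha\eta)+\tfrac{\alpha\gamma}{2}\partial_x|\psi|^2$, and integrating by parts. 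Concretely: the kinetic term $\omega|\psi_x|^2$ generates $\pm\tfrac{2\omega^2}{\lambda}\mathrm{Im}\!\int\Phi'\,\overline\psi_x\psi_{xx}\,dx$; the term $\tfrac{\gamma q}{2}|\psi|^4$, combined with the mass identity $\partial_t|\psi|^2=-2\omega\,\partial_x\mathrm{Im}(\psi\overline\psi_x)$, produces $\pm\tfrac{4\gamma q\omega}{\lambda}\mathrm{Im}\!\int\Phi'|\psi|^2\overline\psi\psi_x\,dx$; the fluid quadratics $\tfrac{\beta}{2}\rho^2+\tfrac{1}{2}\eta^2-\alpha\rho\eta$ produce $\mp\tfrac{\alpha}{\theta\lambda}\!\int\Phi'(\beta\rho^2+\eta^2)\,dx\pm\tfrac{\beta+\alpha^2}{\theta\lambda}\!\int\Phi'\rho\eta\,dx$; and the remaining coupling terms $\rho|\psi|^2$, $\eta|\psi|^2$, $|\psi|^4$ and $(2\eta-\alpha\rho)\overline\psi\psi_x$ receive contributions from several of the pieces above, which must be assembled to give the coefficients $\tfrac{\gamma\beta}{\theta}+\tfrac{\gamma\alpha}{2\theta}$, $-\tfrac{3\gamma\alpha}{2\theta}$, $\tfrac{\gamma^2\alpha}{\theta}$, $\tfrac{\gamma\omega}{2}$ as stated.

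The main obstacle is the bookkeeping in this last step: the quantities $\rho|\psi|^2$ and $\eta|\psi|^2$ arise both as residuals from the Schr\"odinger commutator (through the potential $\gamma(\eta-\tfrac{\alpha}{2}\rho+q|\psi|^2)$) and from the $\gamma\partial_x|\psi|^2$ source in the fluid equations, and only the careful cancellation of signs and coefficients involving $\alpha,\beta,\gamma,\theta$ reproduces the stated values; a convenient consistency check at each intermediate stage is that substituting $\Phi_\pm\equiv 1$ must make every flux-type contribution vanish by energy conservation. Finally, all the integrations by parts above, and in particular the appearance of $\psi_{xx}$ in \eqref{VirialFromEnergyFFR}, are legitimate thanks to the hypothesis $(\psi,\rho,\eta)\in C(\R,H^2\times H^1\times H^1)$; at the energy-space regularity they would only be justified by a standard approximation argument.
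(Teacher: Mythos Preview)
Your approach is correct and essentially the same as the paper's: differentiate the modified energy, substitute the equations \eqref{zr_br_eq}, and integrate by parts. The only organisational difference is that the paper splits $\mathcal{E}_\pm=\mathcal{E}_{\pm,1}+\mathcal{E}_{\pm,2}$ along the Schr\"odinger/fluid decomposition \eqref{FunctionalsFFR1} and differentiates each piece separately, whereas you split into the $(\partial_t\Phi_\pm)e$ and $\Phi_\pm(\partial_t e)$ contributions and invoke the local conservation law $\partial_t e=-\partial_x j$. Your framing has the advantage that the cross-terms carrying the weight $\Phi$ (not $\Phi'$) --- which in the paper appear as three separate pieces $R_{\pm,5},R_{\pm,6},R_{\pm,7}$ that must be matched and cancelled by hand --- are disposed of automatically, since a divergence integrated against $\Phi_\pm$ produces only $\Phi'$ after one integration by parts; conversely, the paper's split makes the intermediate formulae \eqref{Parte1VirialEnergia}--\eqref{Parte2VirialEnergia} available for later use.
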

\begin{proof}
First of all, in order to 
	simplify the computations, we split the derivative of $\mathcal E_\pm$ into the sum of the derivatives of $\mathcal{E}_{\pm,i}$, $i=1,2$, treating separately each of these functionals and then summing-up the corresponding results. In fact, directly differentiating $\mathcal{E}_{\pm,1}$, using that $(\psi,\rho,\eta)$ solves system 
	\eqref{zr_br_eq} and then performing several integration by parts, we obtain
\begin{align}
		\frac{d}{dt}\mathcal E_{\pm,1}(t)
		&
		=
		\frac{\zeta'}{\lambda}
		\int_\R
		\Phi'\left(\frac{\pm x +\zeta(t)}{\lambda(t)}\right)
		\Big(\omega\vert\psi_x\vert^2
		+\tfrac{\gamma q}{2}\vert\psi\vert^4\Big)dx\nonumber
		\\
		&
		\quad
		-\frac{\lambda'}{\lambda}
		\int_\R
		\Phi'\left(\frac{\pm x+\zeta(t)}{\lambda(t)}\right)
		\left(\frac{\pm x+\zeta(t)}{\lambda(t)}\right)
		\Big(\omega\vert\psi_x\vert^2
		+\tfrac{\gamma q}{2}\vert\psi\vert^4\Big)dx\label{Parte1VirialEnergia}
		\\
		&
		\quad
		\pm \frac{2\omega^2}{\lambda}
		\text{Im}
		\int_\R
		\Phi'\left(\frac{\pm x+\zeta(t)}{\lambda(t)}\right)
		 \overline\psi_x \psi_{xx}
		dx
		\pm \frac{4 \gamma q \omega}{\lambda}
		\text{Im}
		\int_\R\Phi'
		\left(\frac{\pm x+\zeta(t)}{\lambda(t)}\right)
		\vert\psi\vert^2 \overline \psi
		\psi_{x}
		dx\nonumber
		\\
		&
		\quad 
		+2\omega\gamma
		\text{Im}
		\int_\R
		\Phi\left(\frac{\pm x+\zeta(t)}{\lambda(t)}\right)
		\overline\psi_x\psi
		\left(\eta  -\tfrac12 \alpha \rho +q |\psi|^2
		\right)_x
		dx	\nonumber
		\\ & \quad +{ 2\gamma q \omega}
		\text{Im}
		\int_\R
		\Phi
		\left(\frac{\pm x+\zeta(t)}{\lambda(t)}\right)
		\left(\vert\psi\vert^2\right)_x
		\overline \psi
		\psi_{x}
		dx\nonumber 
		\\ & =: R_{\pm, 1}+R_{\pm, 2}+R_{\pm, 3}+R_{\pm, 4}
		+R_{\pm, 5}+ R_{\pm, 6}
		.\nonumber 
\end{align}	
We now proceed with $\mathcal E_{\pm, 2}$. 
In fact, in a similar fashion as before, directly 
differentiating $\mathcal{E}_{\pm,2}$,  using that 
$(\psi, \rho, \eta)$ solves \eqref{zr_br_eq}, and then 
performing several integration by parts, we get
		\begin{align}
		\frac d {dt}
		\mathcal E_{\pm,2}(t)
		 	&
			=
			\frac{\zeta'}{\lambda}
			\int_\R\Phi'\left(\frac{\pm x+\zeta(t)}{\lambda(t)}
			\right)\Big(\tfrac{\beta}{2}\rho^2
			+\tfrac{1}{2}\eta^2
			+
			\tfrac{\gamma}{2}(2\eta-\alpha\rho) \vert\psi\vert^2
			-\alpha\rho\eta
			\Big)dx
			\label{Parte2VirialEnergia}	
			\\
			&
			\quad 
			-
			 \frac{\lambda'}{\lambda}
			\int_\R
			\Phi'\left(\frac{\pm x+\zeta(t)}{\lambda(t)}
			\right)
			\left(\frac{\pm x+\zeta(t)}{\lambda(t)}
			\right)\Big(\tfrac{\beta}{2}\rho^2
			+\tfrac{1}{2}\eta^2
			+
			\tfrac{\gamma}{2}(2\eta-\alpha\rho) \vert\psi\vert^2
			-\alpha\rho\eta
			\Big)dx
			\nonumber		
			\\
			&
			\quad 
			\mp
			\frac \alpha {\theta\lambda}
			\int_\R 
			\Phi'\left(\frac{\pm x+ \zeta(t)}{\lambda(t)}\right)	
			\Big(
				\beta \vert \rho \vert ^2 + 
				\vert \eta \vert ^2
			\Big) 
			dx
			\pm 
			\frac{\beta+\alpha^2}{\theta\lambda}
			\int_\R 
			\Phi'\left(\frac{\pm x+ \zeta(t)}{\lambda(t)}\right)	
			 \rho\eta
			dx
			\nonumber
			\\
			&
			\quad 
			\pm 
			\left(\frac{\gamma\beta}{\theta}
			+\frac{\gamma\alpha}{2\theta}\right)\frac1\lambda
			\int_\R 
			\Phi'\left(\frac{\pm x+ \zeta(t)}{\lambda(t)}\right)
			\rho\vert \psi \vert^2 dx
			\mp
			\frac32\frac{\gamma\alpha}{\theta\lambda}
			\int_\R 
			\Phi'\left(\frac{\pm x+\zeta(t)}{\lambda(t)}\right)
			\eta\vert \psi \vert^2 
			dx
			\nonumber
			\\
			&
			\quad 
			\pm
			\frac{\gamma^2\alpha}{\theta\lambda}
			\int_\R 
			\Phi'\left(\frac{\pm x+ \zeta(t)}{\lambda(t)}\right)
			\vert \psi \vert^4
			dx
			\pm  \frac{\gamma \omega}2 \frac 1 \lambda 
			\text{Im} \int_\R 
			\Phi'\left(\frac{\pm x+ \zeta(t)}{\lambda(t)}\right)
			\left(
				2 \eta - \alpha \rho 
			\right)
			\overline \psi \psi_{x} dx
			\nonumber
			\\
			& 
			\quad
			+{\gamma \omega}
			\text{Im} \int_\R 
			\Phi\left(\frac{\pm x+ \zeta(t)}{\lambda(t)}\right)
			\left(
				 2\eta - \alpha \rho 
			\right)_x
			\overline \psi \psi_{x} dx.
			\nonumber
		\end{align}	
Finally, for the sake of simplicity let us define $R_{\pm,7}$ 
as the following quantity
		\[
		R_{\pm, 7}:=\gamma \omega 
		\text{Im} \int_\R 
		\Phi\left(\frac{\pm x+ \zeta(t)}{\lambda(t)}\right)
		\left(
		2\eta - \alpha \rho 
		\right)_x
		\overline \psi \psi_{x} dx.
		\]
Then, it is not difficult to see that with these definitions we have the relation 
$ R_{\pm, 5} =R_{\pm , 6}+ R_{\pm, 7}$. Therefore, summing up all the previous computations, and then using the above relation, we conclude the proof of \eqref{VirialFromEnergyFFR}.
\end{proof}

\subsection{Time integrability of the weighted $L^2$-norm}
\label{FFRFirstPart}
In this subsection we restrict ourselves to the simpler 
case of the time integrability of the weighted $L^2$-norm 
for the Schr\"odinger 
part of the solution $\psi(t,x)$. The integrability 
(and decay) of this weighted-norm is a fundamental part of 
the analysis since (as we shall see) it triggers the decay 
of the whole weighted energy norm. In fact, let us start by 
recalling the relation
% \begin{align}
% 	\frac{d}{dt}\mathcal M_\pm(t)
% 	=
% 	&
% 	-\frac{\lambda'}{\lambda}
% 	\int_{\R} \Phi'\left(\frac{\pm x+\zeta}{\lambda}\right)
% 	\left(\frac{\pm x+\zeta}{\lambda}\right)|\psi|^2dx
% 	+\frac{\zeta'}{\lambda}
% 	\int_{\R} \Phi'\left(\frac{\pm x+\zeta}{\lambda}\right)|\psi|^2dx\nonumber
% \\ 	&\pm\frac{2}{\lambda}
% 	\mathrm{Im}\int_{\R} \Phi'\left(\frac{\pm x+\zeta}{\lambda}\right)
% 	\overline \psi \psi_x dx.\label{VirialFromMassFFR}
% \end{align}

	\begin{equation}\label{FFRMass1}
	\begin{aligned}
	&
	-\frac{d}{dt}\mathcal M_{\pm}(t)
	\pm\frac{2}{\lambda}
	\text{Im}\int_{\R} \Phi'\left(\frac{\pm x+\zeta}{\lambda}
	\right)
	\overline \psi \psi_x dx
	\\
	&\qquad
	=
	\frac{\lambda'}{\lambda}
	\int_{\R} \Phi'\left(\frac{\pm x+\zeta}{\lambda}\right)
	\left(\frac{\pm x+\zeta}{\lambda}\right)|\psi|^2dx
	-\frac{\zeta'}{\lambda}
	\int_{\R}
	 \Phi'\left(\frac{\pm x+\zeta}{\lambda}\right)|\psi|^2dx.
	 \end{aligned}
	\end{equation}
Then, notice that the left-hand side of the above relation is 
time-integrable on $\R_+$, provided that $\Phi' \in L^\infty(\R)$. 
In fact, if $\Phi'$ is bounded, then from H\"older 
inequality, Lemma \ref{uniform_bound_energy} and the fact that
\eqref{time_2_integrability_lambda_zeta} holds, we have
	\[	
	\left\vert \frac{2}{\lambda}
	\text{Im}\int_{\R} \Phi'\left(\frac{\pm x+\zeta}{\lambda}\right)
	\overline \psi \psi_x dx \right\vert
	\lesssim
	\frac{1}{\lambda} \in L^1(\R_+).
	\]
Motivated by the time-integrability above, as well as 
identity \eqref{FFRMass1}, we shall give a suitable 
definition for $\Phi\in C^\infty(\R)$ so that we are able to 
obtain a convenient sign-property in the right-hand side of 
\eqref{FFRMass1}. 
% In the sequel we split the analysis for 
% $\mathcal{M}_\pm$ respectively.

\medskip

To take 
advantage of the structure of the virial identities, 
from now on we consider $\Phi$ to be any non-increasing 
smooth function such
that it satisfies the following 
conditions
	\begin{equation}\label{DefPhi1}
	\big\{ \Phi(s)=1, \ s 	\le -1\big\},\quad  \big\{\Phi(s)=0, \ s \ge 0\big\} \quad \hbox{and} 
	\quad \{\Phi'\equiv -1 \hbox{ on }[-\tfrac{9}{10},-\tfrac{1}{10}]\}.
\end{equation}	
Notice that, as a particular consequence of its definition, 
we have the following inequalities
	\begin{equation}\label{DefPhi2}
	\forall s \in \R, \quad  \Phi'(s)\le 0 \quad 
	\hbox{and}\quad s\Phi'(s)\ge 0.
	\end{equation}
As already mentioned, we now focus in studying the right-hand 
side of \eqref{FFRMass1}. Notice that, thanks to 
\eqref{DefPhi1}-\eqref{DefPhi2} we infer that, for all 
$t\geq0$, the following sign-properties are satisfied
 	\[
 	\frac{\lambda'}{\lambda}
 	\int_{\R} \Phi'\left(\frac{\pm x+\zeta}{\lambda}\right)
 	\left(\frac{\pm x+\zeta}{\lambda}\right)|\psi|^2dx\geq0 \quad \hbox{and}\quad 
 	-\frac{\zeta'}{\lambda}
 	\int_{\R} \Phi'\left(\frac{\pm x+\zeta}{\lambda}\right)|\psi|^2dx\ge 0.
	 \]
	 
Consequently, due to the fact that the left-hand side of 
\eqref{FFRMass1}
is integrable in time, 
we can compute the time-integral over $\R_+$ and get
	\begin{equation}\label{IntegrableL2NormFFR}
	\int_0^\infty\left(
	\frac{\lambda'}{\lambda}
	\int_{\R} \Phi'\left(\frac{\pm x+\zeta}{\lambda}\right)
	\left(\frac{\pm x+\zeta}{\lambda}\right)|\psi|^2dx
	-\frac{\zeta'}{\lambda}
	\int_{\R} \Phi'\left(\frac{\pm x+\zeta}{\lambda}\right)|\psi|^2dx
	\right)dt <\infty.
	\end{equation}
Then, gathering this latter inequality with the sign-property above, we deduce in particular
	\begin{equation*}
	\int_0^\infty
	\frac{\zeta'(t)}{\lambda(t)}
	\int_{\R} \left|\Phi'\left(\frac{\pm x+\zeta(t)}{\lambda(t)}\right)\right|
	|\psi(t,x)|^2dxdt <\infty.
	\end{equation*}
\begin{rem} \label{Sequence}
Notice that, as a particular consequence of the latter inequality, recalling also that $\lambda^{-1}\zeta'\not\in L^1(\R_+)$, we infer the existence of a sequence of times $\{t_n\}_{n\in\R}$, satisfying $t_n \to \infty$,
such that
	\begin{equation}\label{FFR2}
	\lim_{n\to+\infty}
	\int_{\Omega(t_n)}\vert \psi(t_n,x)\vert^2dx = 0,
	\end{equation}
where the set $\Omega(t)$ can be defined, for example, as \begin{align}\label{omega_def}
\Omega(t):=\{x\in\R: \, \tfrac{1}{10}\lambda(t)+\zeta(t)\leq \vert x\vert \leq \tfrac{9}{10}\lambda(t)+\zeta(t)\}.
\end{align}
Moreover, notice that from the above \begin{align}\label{integrability_omega}
\int_0^\infty\dfrac{\lambda'(t)}{\lambda(t)}\int_{\Omega(t)}\vert \psi(t,x)\vert^2dx<+\infty \quad \hbox{and}\quad  \int_0^\infty\dfrac{\zeta'(t)}{\lambda(t)}\int_{\Omega(t)}\vert \psi(t,x)\vert^2dx<+\infty.
\end{align}
Hence, from now on, we can use properties \eqref{FFR2} and \eqref{integrability_omega} without depending on weights $\Phi$ satisfying \eqref{DefPhi1}. In particular, in the sequel we shall use \eqref{FFR2} for compactly supported weight functions encoding the same (or strictly contained) regions as in \eqref{omega_def}.
\end{rem}	

% \subsubsection{The case $\mathcal M_-$: } This\footnote{Recordar que todos los signos estan potencialmente malos debido al cambio de $\pm$. Rechequear pls!!!} case follows mutatus mutandis the above argument. In fact, it is enough to notice that, in this case, we have{\color{blue}\textbf{.....}} \[
% 	\frac{\lambda'}{\lambda}
% 	\int_{\R} \Phi'\left(\frac{x-\zeta}{\lambda}\right)
% 	\left(\frac{x-\zeta}{\lambda}\right)|\psi|^2dx
% 	+\frac{\zeta'}{\lambda}
% 	\int_{\R} \Phi'\left(\frac{x-\zeta}{\lambda}\right)|\psi|^2dx\ge 0, \quad t \ge 2.{\color{red}Chequear}
% 	\]
% Hence gathering the sign-property with the above inequality we deduce that \[
% 	\int_0^\infty
% 	\frac{\zeta'(t)}{\lambda(t)}
% 	\int_{\R} \left|\Phi'\left(\frac{-x+\zeta(t)}{\lambda(t)}\right)\right|
% 	|\psi(t,x)|^2dxdt <\infty.
% \]
% \begin{rem} In the same fashion as before, from this latter inequality we infer, in particular, the existence of a sequence of times $\{t_n\}_{n\in\R}$, satisfying $t_n \to \infty$,
% such that
% 	\[
% 	\lim_{n\to+\infty}\left(
% 	\int_{\R} \Phi'\left(\frac{x-\zeta}{\lambda}\right)|\psi|^2dx
% 	\right)(t_n)=0. \quad {\color{red}\hbox{chequear signos etc}}
% 	\]
% \end{rem}

\subsection{Decay of the $L^2$-norm}\label{DecayL2Norm}
In this section we seek to prove the pointwise decay of 
$L^2$-norm of the Schr\"odinger component of the solution 
$\psi(t,x)$ restricted to the far-field regions. 
% Since both 
% functionals $\mathcal M_+$ and $\mathcal M_-$ can be treated 
% in a similar fashion, we shall restrict ourselves to the case 
% of $\mathcal M_+$. 
In fact, let us start by considering 
$\Psi \in C_0^\infty(\R)$ to be any non-negative function such that
\begin{equation}\label{FFRpsi1}
\text{supp} (\Psi) \subset \left[-\tfrac{3}{4}, 
-\tfrac{1}{4}\right] \quad \hbox{with} \quad \Psi\equiv 1 \hbox{ on } [-\tfrac{3}{5},-\tfrac{2}{5}].
\end{equation}
Notice that, in particular, this implies that 
{$\supp(\Psi)\subset \supp(\Phi')$}. Additionally, we assume 
that $\Psi$ satisfies the following pointwise properties
 	\begin{equation}\label{FFRpsi2}
	 \forall s \in \R, \quad \Psi(s)\lesssim |\Phi'(s)| 
	 \quad \hbox{and} \quad |\Psi'(s)|\lesssim |\Phi'(s)|.
	 \end{equation}
Now, we re-write the previous virial identity 
\eqref{VirialFromMassFFR} in terms of our new weight function $\Psi$ (instead of using $\Phi$). Then, using Lemma \ref{uniform_bound_energy} it is not difficult to see that
	\begin{align}
	&\left\vert \frac{d}{dt}\int_{\R}
	\Psi\left(\frac{x+\zeta}{\lambda}\right)|\psi|^2 dx\right\vert\nonumber	
	\\	& \quad \lesssim
	\frac{1}{\lambda}
	+\frac{\lambda'}{\lambda}\int_{\R}\left|
	\Psi'\left(\frac{\pm x+\zeta}{\lambda}\right)
	\left(\frac{\pm x+\zeta}{\lambda}\right)\right\vert\vert\psi\vert^2dx+\frac{\zeta'}{\lambda}
	\int_{\R}
	\left|\Psi'\left(\frac{\pm x+\zeta}{\lambda}\right)
	\right|
	|\psi|^2
	dx.
	\label{FFR1}
	\end{align}
Now, recall that, as stated in remark \ref{Sequence}, there exists a sequence of time $\{t_n\}_{n\in\N}$, satisfying $t_n \to \infty$, such that \eqref{FFR2} holds. As a consequence, we also have that 
 \[
	\lim_{n \to \infty}
	\left(
	\int_{\R} \Psi\left(\frac{x+\zeta}{\lambda} \right)
	|\psi|^2dx 
	\right)(t_n)= 0. 
\] 
Therefore, we can integrate both sides of \eqref{FFR1} in time over the interval $[t, t_n]$, and then take the limit $t_n\to \infty$, what lead us to 
\begin{align*}
	\int_{\R}
	\Psi\left(\frac{\pm x+\zeta}{\lambda}\right)|\psi|^2 dx & \lesssim
	\int_t^\infty
	\frac{ds}{\lambda(s)}+
	\int_t^\infty \frac{\zeta'}{\lambda} \int_{\R}
	\left|\Psi'\left(\frac{\pm x+\zeta}{\lambda}\right)
	\right|
	|\psi|^2
	dx ds
	\\ & \quad +\int_t^\infty \dfrac{\lambda'}{\lambda}\int_\R \left\vert \Psi'\left(\dfrac{\pm x+\zeta}{\lambda}\right)\left(\dfrac{\pm x+\zeta}{\lambda}\right)\right\vert\vert\psi\vert^2 dxds.
\end{align*}
Finally, by using both time-integrabilities in \eqref{integrability_omega}, 
we can take now the limit $t\to \infty$, from where we conclude that
	\[
	\lim_{t \to \infty}
	\int_{\R}
	\Psi\left(\frac{\pm x+\zeta}{\lambda}\right)|\psi|^2 dx
	=0.
	\]
	
\begin{rem}
Notice that the proof of the integrability (and subsequent decay) of the $L^2$-norm of $\psi$ also works for other definitions of $\lambda$ as well as other definitions of $\Phi$ and $\Psi$. For example, in the above analysis we have only used the fact that the scaling $\lambda(t)$ satisfies 
\[
\frac1\lambda	\in L^1(\R_+) \quad \hbox{ and } \quad  
\frac{\lambda'}{\lambda} \not\in L^1(\R_+). 
\]
In consequence, the proof still holds for any $\lambda$
with such property. As a result, we can take for example 
$\lambda(t)=ct^p$, for any $ p>2$ and any $c\in\R_+$, 
and then following the above computations we obtain the desired 
result. 
\end{rem}	

\subsection{Time-integrability of the full solution}
\label{IntegrabilityOfTheFullSolution}
In this section we seek to show the time integrability of the local energy norm for the remaining terms. Now, in order to make computations simpler, let us break down expression
\eqref{VirialFromEnergyFFR}, so that we can write the cleaner formula
\begin{equation}\label{VirialFromEnergyFFRReWritten}
-\frac d {dt} \mathcal E_\pm(t)+\mathcal R_\pm(t)=
-\mathfrak E_{\pm}(t).	
\end{equation}
More specifically, we define the functionals $\mathfrak{E}_\pm$ and 
$\mathcal{R}_\pm$ given by
\begin{align*}	
\mathfrak E_{\pm}&:= \frac{\zeta'}{\lambda}
		\int_\R
		\Phi'\left(\frac{\pm x+\zeta(t)}{\lambda(t)}\right)
		\Big(\omega\vert\psi_x\vert^2
		+\tfrac{\gamma q}{2}\vert\psi\vert^4\Big)dx\nonumber
		\\
		&
		\quad
		-\frac{\lambda'}{\lambda}
		\int_\R
		\Phi'\left(\frac{\pm x+\zeta(t)}{\lambda(t)}\right)
		\left(\frac{\pm x+\zeta(t)}{\lambda(t)}\right)
		\Big(\omega\vert\psi_x\vert^2
		+\tfrac{\gamma q}{2}\vert\psi\vert^4\Big)dx\nonumber
		\\
		&
		\quad 
		+\frac{\zeta'}{\lambda}
			\int_\R\Phi'\left(\frac{\pm x+\zeta(t)}{\lambda(t)}
			\right)\Big(\tfrac{\beta}{2}\rho^2
			+\tfrac{1}{2}\eta^2
			+
			\tfrac{\gamma}{2}(2\eta-\alpha\rho) \vert\psi\vert^2
			-\alpha\rho\eta
			\Big)dx	\label{VirialFromEnergyFFR}		
			\\
			&
			\quad 
			-
			 \frac{\lambda'}{\lambda}
			\int_\R
			\Phi'\left(\frac{\pm x+\zeta(t)}{\lambda(t)}
			\right)
			\left(\frac{\pm x+\zeta(t)}{\lambda(t)}
			\right)\Big(\tfrac{\beta}{2}\rho^2
			+\tfrac{1}{2}\eta^2
			+
			\tfrac{\gamma}{2}(2\eta-\alpha\rho) \vert\psi\vert^2
			-\alpha\rho\eta
			\Big)dx,	\nonumber		
\\ \mathcal R_\pm(t)
	&	
	:=
	\pm\frac{2\omega^2}{\lambda}
			\mathrm{Im}
			\int_\R
			\Phi'\left(\frac{\pm x+\zeta(t)}{\lambda(t)}\right)
			 \overline\psi_x \psi_{xx}
			dx
			\pm\frac{4 \gamma q \omega}{\lambda}
			\mathrm{Im}
			\int_\R\Phi'
			\left(\frac{\pm x+\zeta(t)}{\lambda(t)}\right)
			\vert\psi\vert^2 \overline \psi
			\psi_{x}
			dx\nonumber
			\\
			&
			\quad 
			\mp
			\frac \alpha {\theta\lambda}
			\int_\R 
			\Phi'\left(\frac{\pm x+ \zeta(t)}{\lambda(t)}\right)	
			\Big(
				\beta \vert \rho \vert ^2 + 
				\vert \eta \vert ^2
			\Big) 
			dx
			\pm 
			\frac{\beta+\alpha^2}{\theta\lambda}
			\int_\R 
			\Phi'\left(\frac{\pm x+ \zeta(t)}{\lambda(t)}\right)	
			 \rho\eta
			dx\nonumber
			\\
			&
			\quad 
			\pm
			\left(\frac{\gamma\beta}{\theta}
			+\frac{\gamma\alpha}{2\theta}\right)\frac1\lambda
			\int_\R 
			\Phi'\left(\frac{\pm x+ \zeta(t)}{\lambda(t)}\right)
			\rho\vert \psi \vert^2 dx
			\mp
			\frac32\frac{\gamma\alpha}{\theta\lambda}
			\int_\R 
			\Phi'\left(\frac{\pm x+ \zeta(t)}{\lambda(t)}\right)
			\eta\vert \psi \vert^2 
			dx\nonumber
			\\
			&
			\quad 
			\pm
			\frac{\gamma^2\alpha}{\theta\lambda}
			\int_\R 
			\Phi'\left(\frac{\pm x+ \zeta(t)}{\lambda(t)}\right)
			\vert \psi \vert^4
			dx
			\pm \frac{\gamma \omega}2 \frac 1 \lambda 
			\mathrm{Im} \int_\R 
			\Phi'\left(\frac{\pm x+ \zeta(t)}{\lambda(t)}\right)
			\left(
				2 \eta - \alpha \rho 
			\right)
			\overline \psi \psi_{x} dx
	\\
	&
	=:\mathcal R_{\pm, 1}+\mathcal R_{\pm, 2}
	+\cdots+\mathcal R_{\pm, 8}.
\end{align*}
On the other hand, notice that, since $(\psi,\rho,\eta)$ is a solution to system 
\eqref{zr_br_eq} belonging to the class 
$C(\R,H^2\times H^1\times H^1)$, then the energy associated
to this solution $ E\left(\psi(t), \rho(t), \eta(t)\right)$ 
is finite. This means that, because the weight $\Phi$ 
considered in 
the modified functional $\mathcal E$ is bounded, one has 
\[
 \int_{\R_+}
\frac d {dt} \mathcal E_\pm dt<\infty.
\]
Now, we treat the remaing term. 
% Now, to treat the remaining terms,
% we split the proof into to cases: $\mathcal E_+$ and 
% $\mathcal E_-$.

\medskip

% \emph{The case $\mathcal E_+$:} In fact, 
Let us consider {$\Phi\in C^\infty(\R)$} 
to be any non-increasing function such that \eqref{DefPhi1} 
holds, 
and hence, satisfying also 
\eqref{DefPhi2}. Now, we intend to bound term by term the 
right-hand side of \eqref{VirialFromEnergyFFR}. In fact, 
first, since $\Phi'$ is bounded, 
using Young inequality and Sobolev embbeding along with 
Lemma 
\ref{uniform_bound_energy}, we see that
	\[
	\vert \mathcal R_{\pm, 2}\vert
	\le 
	\frac{4 \gamma \omega \vert q \vert}{\lambda(t)}
	\Vert\psi\Vert_{L^\infty(\R)}^2
	\left(\Vert\psi\Vert_{L^2(\R)}^2
	+ \Vert \psi_x \Vert_{L^2(\R)}^2\right)
	\lesssim \frac{1}
	{\lambda(t)}  \in L^1(\R_+)
	\]	
Also, immediately from Proposition \eqref{uniform_bound_energy}, 
we obtain
	\[
	\vert\mathcal R_{\pm, 3}\vert \le 
	\frac{\vert\alpha\vert}{\theta\lambda(t)}
	\int_\R 
	\left|\Phi'\left(\frac{\pm x+\zeta(t)}{\lambda(t)}\right)\right|
	\left(
	\beta \vert \rho \vert^2 +\vert \eta \vert ^2
	\right)
	dx 
	\lesssim 
	\frac{1}{\lambda(t)}
	\in L^1(\R_+).
	\]
On the other hand, for $\mathcal R_{+,4}$, by using Young inequality for products and Lemma \ref{uniform_bound_energy} we get
	\[
	\vert\mathcal R_{\pm,4}\vert
	\le 
	\frac{\beta+\alpha^2}{2 \theta} \frac{1}{\lambda(t)} 
	\left(
		\Vert\rho(t)\Vert_{L^2(\R)}^2 
		+\Vert \eta(t) \Vert^2_{L^2(\R)}
	\right)
	\lesssim 
	\frac{1}{\lambda(t)} 
	\in L^1(\R_+).
	\]	
We point out that all the remaining terms $\mathcal{R}_{\pm,i}$, $i=5,...,8$, can be treated in the very same fashion as for the previous terms. In fact, from H\"older inequality, Sobolev embedding and then using Lemma \ref{uniform_bound_energy} in the resulting right-hand side, we deduce that
\[
\sum_{i=5}^8\vert \mathcal{R}_{\pm,i}\vert\lesssim 
	\frac{1}{\lambda(t)}
	\left(
	\Vert \eta(t) \Vert^2_{L^2(\R)}
	+ \Vert \rho(t) \Vert_{L^2(\R)}^2
	+\Vert \psi(t)\Vert_{H^1(\R)}^4\right)\lesssim\frac 1 {\lambda(t)} \in L^1(\R_+).
	\]
Finally, to complete the analysis regarding the time-integrability of $\mathcal{R}_\pm$, it only remains to consider  $\mathcal R_{\pm, 1}$. In order to do that, we first need to recall one of the main results proven in \cite{LiMa} which give us the polynomial growth of the $H^2$-norm of $\psi(t)$. In fact, from \cite[Proposition 1.1]{LiMa} we have that 
\begin{align}\label{ineq_lima}
\Vert \psi(t)\Vert_{H^2(\R)}\lesssim 1+\vert t\vert^{1^+}.	
\end{align}
As a consequence, recalling the explicit form of $\lambda(t)$ in \eqref{def_lambda_far_fields}, we conclude that \[
\dfrac{1}{\lambda(t)}\Vert\psi (t)\Vert_{H^2(\R)} \in L^1(\R_+)\] 
Therefore, by using H\"older inequality as well as Lemma \ref{uniform_bound_energy} and \eqref{ineq_lima}, 
we infer that 
\begin{align*}
	\vert \mathcal R_{\pm, 1}\vert 
	&
	% \le 
	% \frac{2\omega^2}{\lambda}
	% \int_\R
	% \left\vert\Phi'\left(\frac{\pm x+\zeta(t)}{\lambda(t)}\right)
	%  \overline\psi_x \psi_{xx}
	%  \right \vert 
	% dx
	% \\
	% &
	% \le 
	% \frac{2\omega^2}{\lambda(t)}
	% \left(
	% \int_\R
	% \left\vert\Phi'
	% \left(\frac{\pm x+\zeta(t)}{\lambda(t)}\right)
	% \right\vert^2
	%  \vert\psi_x \vert^2
	% dx
	% \right)^\frac12
	% \Vert
	% \psi_{xx}(t)
	% \Vert_{L^2(\R)}
	% \\
	% &
	\lesssim 
	\frac{1}{\lambda(t)}
	\Vert
	\psi_x(t)
	\Vert_{L^2(\R)}
	\Vert
	\psi_{xx}(t)
	\Vert_{L^2(\R)} 
	\in L^1(\R_+).
\end{align*}

% \[
% \vert \mathcal R_{\pm, 1}\vert 
% \le \frac{\omega^2}{\lambda(t)}
% \left(\Vert\psi(t)\Vert^2_{H^1(\R)}+\Vert \psi(t)\Vert^2_{H^2(\R)}
% \right) \in L^1(\R_+).
% \]
In conclusion, we can integrate over $\R_+$ in time both sides of 
\eqref{VirialFromEnergyFFRReWritten}, from where we obtain 
\begin{equation}\label{CasiUltimaEcuacion}
\int_{\R_+}-\mathfrak E_\pm(t)dt<\infty.
\end{equation}
Now, let us break down this expression so that 
we can analyze the conflicting terms of $\mathfrak E_\pm(t)$
without sign. 
More specifically, we would like to absorbs or discard 
the part of the expression that does not constitute the 
weighted energy norm.  In fact, in similar fashion as in the proof of Proposition 
\ref{uniform_bound_energy}, we have that 
	\[
	\tfrac{\beta}{2}\rho^2
	+\tfrac{1}{2}\eta^2
	-\alpha\rho\eta
	\geq \dfrac{\beta-\alpha^2}{4}
	\vert \rho\vert^2
	+\dfrac{\beta}{2(\beta+\alpha^2)}
	\vert\eta\vert^2>0,
	\]
where we used the fact that $\beta-\alpha^2>0$ and $\beta>0$.
Also, we have that 
	\begin{align*}
	\dfrac{\gamma}{2}\big(2\eta
	-\alpha\rho\big)\vert\psi\vert^2 & 
	\leq 
	\dfrac{\beta-\alpha^2}{16}\vert\rho
	\vert^2
	+\dfrac{\beta}{16(\beta+\alpha^2)}\vert 
	\eta\vert^2
	\\ & \qquad + 
	\left(\dfrac{\gamma^2(\beta+\alpha^2)}{8\beta}
	+\dfrac{2\alpha^2\gamma^2}{\beta-\alpha^2}
	\right)\vert\psi\vert^4.	
	\end{align*}
Gathering both equations above, we get 	
\begin{equation}\label{estimationBadTerms}	
\begin{aligned}
	\tfrac{\beta}{2}\rho^2
	+\tfrac{1}{2}\eta^2
	-\alpha\rho\eta
	+\dfrac{\gamma}{2}\big(2\eta
	-\alpha\rho\big)\vert\psi\vert^2
	& 
	\geq 
	\dfrac{3(\beta-\alpha^2)}{16}\vert\rho
	\vert^2
	+\dfrac{7\beta}{16(\beta+\alpha^2)}\vert 
	\eta\vert^2
	\\ & \qquad -
	\left(\dfrac{\gamma^2(\beta+\alpha^2)}{8\beta}
	+\dfrac{2\alpha^2\gamma^2}{\beta-\alpha^2}
	\right)\vert\psi\vert^4.	
	\end{aligned}
\end{equation}

Finally, to deal with the remaining uncontrolled terms 
(involving the $L^4$-norm of $\psi$), 
we 
make use of \eqref{IntegrableL2NormFFR}. Indeed, 
notice that by Sobolev embedding,
	\begin{align*}
	&
	\int_\R
	\left[
	\frac{\lambda'}{\lambda}
	\Phi'\left(\frac{\pm x+\zeta(t)}{\lambda(t)}\right)
	\left(\frac{\pm x+\zeta(t)}{\lambda(t)}\right)
	-
	\frac{\zeta'}{\lambda}	
	\Phi'\left(\frac{\pm x+\zeta(t)}{\lambda(t)}\right)
	\right]
	\vert\psi\vert^4dx
	\\
	& 
	\le 
	\Vert \psi\Vert_{H^1(\R)}^2
	\int_\R
	\left[
	\frac{\lambda'}{\lambda}
	\Phi'\left(\frac{\pm x+\zeta(t)}{\lambda(t)}\right)
	\left(\frac{\pm x+\zeta(t)}{\lambda(t)}\right)
	-
	\frac{\zeta'}{\lambda}	
	\Phi'\left(\frac{\pm x+\zeta(t)}{\lambda(t)}\right)
	\right]
	\vert\psi\vert^2dx.
\end{align*}	
Then, thanks to \eqref{IntegrableL2NormFFR}, we can conlude 
that
\[
	\int_\R
	\left[
	\frac{\lambda'}{\lambda}
	\Phi'\left(\frac{\pm x+\zeta(t)}{\lambda(t)}\right)
	\left(\frac{\pm x+\zeta(t)}{\lambda(t)}\right)
	-
	\frac{\zeta'}{\lambda}	
	\Phi'\left(\frac{\pm x+\zeta(t)}{\lambda(t)}\right)
	\right]
	\vert\psi\vert^4dx	
\]
is integrable in time over $\R_+$. Therefore, one can write the following

\begin{align}	
&
\int_\R
	\left[
	\frac{\lambda'}{\lambda}
	\Phi'\left(\frac{\pm x+\zeta(t)}{\lambda(t)}\right)
	\left(\frac{\pm x+\zeta(t)}{\lambda(t)}\right)
	-
	\frac{\zeta'}{\lambda}	
	\Phi'\left(\frac{\pm x+\zeta(t)}{\lambda(t)}\right)
	\right]
	\omega\vert\psi_x\vert^2dx
	\nonumber	
\\
&
+\int_\R
	\left[
	\frac{\lambda'}{\lambda}
	\Phi'\left(\frac{\pm x+\zeta(t)}{\lambda(t)}\right)
	\left(\frac{\pm x+\zeta(t)}{\lambda(t)}\right)
	-
	\frac{\zeta'}{\lambda}	
	\Phi'\left(\frac{\pm x+\zeta(t)}{\lambda(t)}\right)
	\right]
	\tfrac{3(\beta - \alpha^2)}{16}\vert\rho\vert^2dx	
	\label{IntegrabilityOfVirialTermsFFR}
\\
&
+\int_\R
	\left[
	\frac{\lambda'}{\lambda}
	\Phi'\left(\frac{\pm x+\zeta(t)}{\lambda(t)}\right)
	\left(\frac{\pm x+\zeta(t)}{\lambda(t)}\right)
	-
	\frac{\zeta'}{\lambda}	
	\Phi'\left(\frac{\pm x+\zeta(t)}{\lambda(t)}\right)
	\right]
	\tfrac{7 \beta}{16 (\beta-\alpha^2)}\vert\eta\vert^2dx
	\nonumber	
\\
&
\quad 
	\le - \mathfrak{E}_{\pm} 
	+ K \int_\R
	\left[
	\frac{\lambda'}{\lambda}
	\Phi'\left(\frac{\pm x+\zeta(t)}{\lambda(t)}\right)
	\left(\frac{\pm x+\zeta(t)}{\lambda(t)}\right)
	-
	\frac{\zeta'}{\lambda}	
	\Phi'\left(\frac{\pm x+\zeta(t)}{\lambda(t)}\right)
	\right]
	\vert\psi\vert^4dx,	
	\nonumber
\end{align}	
where $K$ is the absolute value of a constant depending on 
$\beta, \alpha, \gamma, q$. This way, we conclude that the 
right-hand side of the inequality above can be integrated in time over $\R_+$. 
Moreover, since 
\[
\Phi'(s)s\ge 0 \quad \text{and} \quad 
\Phi'(s)\le 0 \quad 
\forall s \in \R, 	
\]
we also infer that 
\[
\int_{\R_+}
\frac{\zeta'}{\lambda}
	\int_\R
	\left|
	\Phi'\left(\frac{\pm x+\zeta(t)}{\lambda(t)}\right)
	\right|
	\left(
	\vert\psi_x\vert^2
	+\vert\rho\vert^2	
	+\vert\eta\vert^2\right)dx	
dt<\infty.
\]
\begin{rem}
We notice that, thanks to the fact that 
$\frac{\zeta'}\lambda \not\in L^1(\R_+)$, one 
infers that there exists a sequence $\{t_n\}$, with
$\{t_n\}\to \infty$, such that
\begin{equation}\label{UltimoLimite}
	\left(
		\int_\R
	\left|
	\Phi'\left(\frac{\pm x+\zeta(t_n)}{\lambda(t_n)}\right)
	\right|
	\left(
	\vert\psi_x\vert^2
	+\vert\rho\vert^2	
	+\vert\eta\vert^2\right)dx
	\right)(t_n)
	\to 0, \text{ as } t_n \to \infty.
\end{equation}	

\end{rem}

\subsection{Decay of the full solution}

Finally, in this subsection we devote ourselves to prove 
decay of solutions in the energy space along the curves $\pm \zeta$. The idea is the same as for the decay of the $L^2$-norm in Subsection \ref{DecayL2Norm}. We proceed by 
taking a convenient weight $\Psi$ such that \eqref{FFRpsi1} 
holds. Then, we have that $\supp(\Psi)\subset \supp(\Phi')$ and 
\eqref{FFRpsi2} is satisfied. Next, we consider the virial 
identity \ref{VirialDecayAlongCurves} with the weight 
$\Psi$ instead 
of $\Phi$. Thus, taking into account the previous estimations
stated in Subsection \ref{IntegrabilityOfTheFullSolution} 
along with the pointwise properties 
\eqref{FFRpsi1}-\eqref{FFRpsi2}, we 
have that 
\begin{align}
&\frac{d}{dt} 
	\int_\R\Psi\left(\frac{\pm x+\zeta(t)}{\lambda(t)}
	\right)\Big(\omega\vert\psi_x\vert^2%
	+\tfrac{\gamma q}{2}\vert\psi\vert^4+\tfrac{\beta}{2}\rho^2
	+\tfrac{1}{2}\eta^2+\tfrac{\gamma}{2}(2\eta-\alpha\rho) \vert\psi\vert^2-\alpha\rho\eta\Big)dx
	\nonumber
	\\	
	&	
	\quad 
		\le
		\frac{\zeta'}{\lambda}
		\int_\R
		\Psi'\left(\frac{\pm x+\zeta(t)}{\lambda(t)}\right)
		\Big(\omega\vert\psi_x\vert^2
		+\tfrac{\gamma q}{2}\vert\psi\vert^4\Big)dx
		\nonumber
		\\
		&
		\qquad
		-\frac{\lambda'}{\lambda}
		\int_\R
		\Psi'\left(\frac{\pm x+\zeta(t)}{\lambda(t)}\right)
		\left(\frac{\pm x+\zeta(t)}{\lambda(t)}\right)
		\Big(\omega\vert\psi_x\vert^2
		+\tfrac{\gamma q}{2}\vert\psi\vert^4\Big)dxdt
		\nonumber
		\\
		&
		\qquad 
		+\frac{\zeta'}{\lambda}
			\int_\R\Psi'\left(\frac{\pm x+\zeta(t)}{\lambda(t)}
			\right)\Big(\tfrac{\beta}{2}\rho^2
			+\tfrac{1}{2}\eta^2
			+
			\tfrac{\gamma}{2}(2\eta-\alpha\rho) \vert\psi\vert^2
			-\alpha\rho\eta
			\Big)dx
			\nonumber
			\\
			&
			\qquad 
			-
			 \frac{\lambda'}{\lambda}
			\int_\R
			\Psi'\left(\frac{\pm x+\zeta(t)}{\lambda(t)}
			\right)
			\left(\frac{\pm x+\zeta(t)}{\lambda(t)}
			\right)\Big(\tfrac{\beta}{2}\rho^2
			+\tfrac{1}{2}\eta^2
			+
			\tfrac{\gamma}{2}(2\eta-\alpha\rho) 
			\vert\psi\vert^2
			-\alpha\rho\eta
			\Big)dx	\nonumber		
			\\
			& 
			\qquad 
			+\frac C{\lambda(t)}\left(1+\Vert \psi(t)\Vert_{H^2(\R)}
			\right), 
			\nonumber
\end{align}
where we recall that $\lambda^{-1}$ and $\lambda^{-1}\Vert \psi \Vert_{H^2}$ are both time-integrable in $\R_+$. Moreover, the whole right-hand side of the last inequality is 
integrable. Indeed, because $\Psi$ satisfies 
\eqref{FFRpsi1}-\eqref{FFRpsi2}
then \eqref{IntegrabilityOfVirialTermsFFR} along with Young 
inequality implies that 

\begin{align*}
	&
	\frac{\zeta'}{\lambda}
		\int_\R
		\Psi'\left(\frac{\pm x+\zeta(t)}{\lambda(t)}\right)
		\Big(\omega\vert\psi_x\vert^2
		+\tfrac{\gamma q}{2}\vert\psi\vert^4\Big)dx
		\\
		&
		-\frac{\lambda'}{\lambda}
		\int_\R
		\Psi'\left(\frac{\pm x+\zeta(t)}{\lambda(t)}\right)
		\left(\frac{\pm x+\zeta(t)}{\lambda(t)}\right)
		\Big(\omega\vert\psi_x\vert^2
		+\tfrac{\gamma q}{2}\vert\psi\vert^4\Big)dxdt
		\\
		&
		+\frac{\zeta'}{\lambda}
		\int_\R\Psi'\left(\frac{\pm x+\zeta(t)}{\lambda(t)}
		\right)\Big(\tfrac{\beta}{2}\rho^2
		+\tfrac{1}{2}\eta^2
		+
		\tfrac{\gamma}{2}(2\eta-\alpha\rho) \vert\psi\vert^2
		-\alpha\rho\eta
		\Big)dx
		\\
		&
		-
		\frac{\lambda'}{\lambda}
		\int_\R
		\Psi'\left(\frac{\pm x+\zeta(t)}{\lambda(t)}
		\right)
		\left(\frac{\pm x+\zeta(t)}{\lambda(t)}
		\right)\Big(\tfrac{\beta}{2}\rho^2
		+\tfrac{1}{2}\eta^2
		+
		\tfrac{\gamma}{2}(2\eta-\alpha\rho) 
		\vert\psi\vert^2
		-\alpha\rho\eta
		\Big)dx
		\\
		&
		\le C \left(- \mathfrak{E}_{\pm} 
		+ K \int_\R
		\left[
		\frac{\lambda'}{\lambda}
		\Phi'\left(\frac{\pm x+\zeta(t)}{\lambda(t)}\right)
		\left(\frac{\pm x+\zeta(t)}{\lambda(t)}\right)
		-
		\frac{\zeta'}{\lambda}	
		\Phi'\left(\frac{\pm x+\zeta(t)}{\lambda(t)}\right)
		\right]
		\vert\psi\vert^4dx\right).
\end{align*}
Now, recall that we have already shown in the previous section that the right-hand side of the above inequality is time-integrable in $\R_+$. Therefore, we conclude that there exists a time-integrable function $g:\R\to\R$ such that we can write \begin{align*}
	\left \vert
	\frac{d}{dt} 
		\int_\R\Psi\left(\frac{\pm x+\zeta(t)}{\lambda(t)}
		\right)\Big(\omega\vert\psi_x\vert^2%
		+\tfrac{\gamma q}{2}\vert\psi\vert^4+\tfrac{\beta}{2}
		\rho^2
		+\tfrac{1}{2}\eta^2+\tfrac{\gamma}{2}(2\eta-\alpha\rho) \vert\psi\vert^2-\alpha\rho\eta\Big)dx
		\right \vert \le 	
				g(t).
	\end{align*}
 Consequently, we are entitled to integrate over the time 
 interval $[t, t_n]$ and, because of
 \eqref{UltimoLimite}, taking $t_n\to \infty$, we get 
\begin{align*} 
	&\int_\R\Psi\left(\frac{\pm x+\zeta}{\lambda}
		\right)\Big(\omega\vert\psi_x\vert^2%
		+\tfrac{\gamma q}{2}\vert\psi\vert^4+\tfrac{\beta}{2}
		\rho^2
		+\tfrac{1}{2}\eta^2+\tfrac{\gamma}{2}(2\eta-\alpha\rho) 
		\vert\psi\vert^2-\alpha\rho\eta\Big)dx \le 	
		\int_t^\infty
		g(\tau)dt.
\end{align*}
Now, notice that, by using inequality \eqref{estimationBadTerms} we can
re-write the expression above in terms of the energy norm, 
as
\begin{align*}
	\int_\R\Psi\left(\frac{\pm x+\zeta(t)}{\lambda(t)}
		\right)\Big(\omega\vert\psi_x\vert^2%
		&  
		+\tfrac{\gamma (\beta-\alpha^2)}{16}
		\rho^2
		+\tfrac{7\beta}{16(\beta+\alpha^2)}\eta^2
		\Big)(t,x)dx
		\\
		&
		\lesssim 	
		\int_t^\infty
		g(\tau)dt 
		+\int_\R\Psi\left(\dfrac{\pm x+\zeta(t)}{\lambda}\right)\vert \psi(t)\vert^4dx.
	\end{align*}
Finally, notice that the latter integral involving $\vert\psi(t,x)\vert^4$ converges to zero as 
$t \to \infty$. In fact, this is a consequence of the decay of the 
$L^2$-norm \eqref{MT2Tesis1} and Lemma \eqref{uniform_bound_energy}, 
along with the Gagliardo-Nirenberg 
inequality, that allows us to bound the $L^4$-norm with the 
$H^1$-norm and $L^2$-norm. Then, to conclude, 
we take $t\to \infty$ in the latter inequality above, from where we obtain the decay 
\[
	\lim_{t\to \infty}
		\int_\R\Psi\left(\frac{\pm x+\zeta(t)}{\lambda(t)}
			\right)\Big(\vert\psi_x(t,x)\vert^2	+			\rho^2(t,x)+\eta^2(t,x)\Big)(t,x)dx	=0.
\]
The proof is complete.

\medskip

% ============= FIN DE DOCUMENTO ==============

\end{document}